\date{August 19, 2014}
\theoremstyle{plain}
 \newtheorem{introthm}{Theorem}
 \newtheorem{introcor}[introthm]{Corollary}
 \newtheorem{theorem}{Theorem}
 \newtheorem{proposition}[theorem]{Proposition}
 \newtheorem{lemma}[theorem]{Lemma}
 \newtheorem*{lemma*}{Lemma}
 \newtheorem{fact}[theorem]{Fact}
\theoremstyle{definition}
 \newtheorem{definition}[theorem]{Definition}
\theoremstyle{remark}
 \newtheorem{remark}[theorem]{Remark}
\numberwithin{equation}{section}
\newcommand{\op}{\operatorname}
\newcommand{\R}{\boldsymbol{R}}
\newcommand{\GL}{\op{GL}}
\newcommand{\SO}{\op{SO}}
\newcommand{\M}{\op{M}}
\newcommand{\Cusp}{\mathcal{C}}
\newcommand{\F}{\mathcal{F}}
\renewcommand{\phi}{\varphi}
\newcommand{\pmt}[1]{{\begin{pmatrix} #1  \end{pmatrix}}}
\title{%
    Isometric deformations of cuspidal edges
}
\author[K. Naokawa]{Kosuke Naokawa}
\author[M. Umehara]{Masaaki Umehara}
\author[K. Yamada]{Kotaro Yamada}
\subjclass[2010]{%
 Primary 57R45;   
 Secondary 53A05. 
}
\keywords{cuspidal edges, isometric deformation}
\thanks{%
  The first author was partly supported by 
  the Grant-in-Aid for JSPS Fellows.
  The second and third authors  were 
  partially supported by Grant-in-Aid for 
  Scientific Research (A) No.~262457005, 
  and Scientific Research (C) No.~26400006,
  respectively, 
  from the Japan Society for the Promotion of Science.}
\address{%
  Department of Mathematics \endgraf
  Faculty of Science \endgraf
  Kobe University \endgraf
  1-1, Rokkodaicho, Nada, Kobe, 657-8501\endgraf
  Japan
  }
\email{naokawa@port.kobe-u.ac.jp}
\address{
  Department of Mathematical and Computing Sciences \endgraf
  Tokyo Institute of Technology \endgraf
  2-12-1-W8-34, O-okayama, Meguro \endgraf
  Tokyo 152-8552 \endgraf
  Japan
  }
\email{umehara@is.titech.ac.jp}
\address{%
  Department of Mathematics \endgraf
  Tokyo Institute of Technology \endgraf
  2-12-1-H-7,  O-okayama, Meguro\endgraf
  Tokyo 152-8551\endgraf 
  Japan
}%
\email{kotaro@math.titech.ac.jp}
\begin{document}
\maketitle
\begin{abstract}
 Along  cuspidal edge singularities on a given surface in 
Euclidean 3-space $\R^3$,
 which can be parametrized by a regular space curve $\hat\gamma(t)$,
 a unit normal vector field $\nu$  is well-defined as 
 a smooth vector field of the surface.
 A cuspidal edge singular point is  called {\it generic\/} if the 
 osculating plane of $\hat\gamma(t)$ is not orthogonal to $\nu$. 
This genericity is equivalent to
 the condition  that its limiting normal curvature 
 $\kappa_\nu$ takes a non-zero value.
 In this paper, we show that a given generic 
 (real analytic) cuspidal edge $f$ 
 can be isometrically  deformed  preserving $\kappa_\nu$
 into a cuspidal edge whose singular set
 lies in a plane.
 Such a limiting cuspidal edge is uniquely determined from the initial
 germ of the cuspidal edge.
\end{abstract}

\section*{Introduction}
Let $\Sigma^2$ be a 2-manifold.
A singular point $p\in \Sigma^2$ of a $C^\infty$-map germ 
$f:(\Sigma^2,p)\to\R^3$  is a {\em cuspidal edge}\/
if $f$ at $p$ is right-left equivalent to  
$(u,v)\mapsto(u,v^2,v^3)$ 
at the origin.
Recently, the differential geometry of co-rank one singularities
(including cuspidal edges)
on surfaces was discussed by several geometers
(\cite{dt,fh,ggs,HHNUY,MB, Oset-Tari,SUY, faridtari}). 
In particular, in \cite{HHNUY}, isometric deformations
of a special class of cross caps were discussed.
Relating to this, Martins-Saji \cite{MS} defined 
several differential geometric invariants on  cuspidal edges, 
and gave geometric meanings for them.
Moreover, it was shown in \cite{MSUY} that
the limiting normal curvature $\kappa_\nu$ 
defined in \cite{SUY} (cf.\ \eqref{eq:knu}) is closely related to 
the behavior of  Gauss maps around cuspidal edges.

On the other hand, the proof of the classical Janet-Cartan 
theorem on the local existence of isometric embeddings 
of real analytic Riemannian $n$-manifolds 
into the Euclidean space $\R^{n(n+1)/2}$ yields that
any generic regular surface in $\R^3$ has a non-trivial
family of isometric deformations.
So it is natural to expect the existence of such 
non-trivial isometric deformations 
for surfaces with singularities.
As shown in \cite{HHNUY} and \cite{MSUY},  
certain classes of ruled cross caps and cuspidal edges
admit non-trivial isometric deformations, respectively.
However, general cases have not been discussed yet. 

Along cuspidal edge singularities of a given $C^\infty$-map germ%
\footnote{%
  Though the definitions here are for $C^{\infty}$-maps,
  we consider only real analytic map germs in this paper
  since we shall apply the Cauchy-Kovalevskaya theorem.
}%
$f:(\Sigma^2,p)\to \R^3$, the unit normal vector field $\nu$ 
is well-defined as a smooth vector field of the surface.
Let $\gamma(t)$ be a regular curve in $\Sigma^2$ 
satisfying $\gamma(0)=p$ as a 
parametrization of cuspidal edge singularities of the map $f$.
We call $\gamma(t)$ the {\it singular curve\/} of $f$.
We set
\[
     \hat \gamma(t):=f\circ \gamma(t),
\]
which is a regular space curve. 
Let $\kappa_s(t)$ be the singular curvature function 
along the curve $\gamma(t)$ (cf.\ \cite[(1.7)]{SUY}),
and $\kappa_\nu(t)$ the limiting normal curvature
along $\gamma(t)$ (cf.\ \cite[(3.11)]{SUY} and \eqref{eq:knu}). 
Then the curvature function of $\hat \gamma(t)$ 
as a space curve is given by
(cf.\ \cite{MSUY})
\begin{equation}\label{eq:kappa}
   \kappa(t)=\sqrt{\kappa_s(t)^2+\kappa_\nu(t)^2}.
\end{equation}
In \cite{SUY}, \cite{MS} and \cite{MSUY},
the singular curvature function $\kappa_s(t)$ and
the limiting normal curvature function $\kappa_\nu(t)$
are considered as geometric invariants along
cuspidal edge singularities, as well as 
the curvature function
$\kappa(t)$ and the torsion function $\tau(t)$ 
of $\hat \gamma(t)$.
The relationships amongst 
$\kappa_s$, $\kappa_\nu$ and $\tau$ are
given in \cite{MS}.

An invariant $I$ of map germs at $p \in \Sigma^2$ is called 
\textit{intrinsic}
if it is determined only by the first fundamental form
(cf. \cite{HHNSUY}).
The singular curvature $\kappa_s$ is a typical intrinsic 
invariant of a cuspidal edge singularity.
In \cite{MSUY}, the cuspidal 
curvature $\kappa_c$ at a given cuspidal edge singular
point $p$ is defined. Let $\Pi$ be the plane in $\R^3$ passing through
$f(p)$ perpendicular to the vector $d\hat\gamma(0)/dt$.
Then the intersection of the image of the singular set of $f$ by
$\Pi$ gives a $3/2$-cusp in the plane $\Pi$.
The value $\kappa_c(p)$ coincides with the cuspidal curvature
of this $3/2$-cusp (cf. \cite{MSUY}).
The following assertion holds:

\begin{fact}[\cite{MSUY}]\label{fact:prod}
 The value $|\kappa_c\kappa_\nu|$ is an intrinsic invariant.
\end{fact}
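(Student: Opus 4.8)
The plan is to extract the product $\kappa_c\kappa_\nu$ from the rate at which the Gaussian curvature $K$ blows up transversally to the singular curve, using the \emph{Theorema Egregium} in the form $LN-M^2=K(EG-F^2)$ to guarantee intrinsicality. First I would fix adapted coordinates $(u,v)$ so that the singular set is $\{v=0\}$ and $f_v(u,0)=0$, and Taylor-expand
\[
 f(u,v)=\gamma_0(u)+\tfrac{v^2}{2}A(u)+\tfrac{v^3}{6}B(u)+O(v^4),
\]
where $\gamma_0=\hat\gamma$, $A=f_{vv}|_{v=0}$, $B=f_{vvv}|_{v=0}$, with $A,B$ linearly independent (the cuspidal-edge condition). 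A direct computation of the first fundamental form gives, at $v=0$, the relations $E=|\gamma_0'|^2$, $F_v=\langle\gamma_0',A\rangle$, $G_{vv}=2|A|^2$, so that $|\gamma_0'\times A|^2=\tfrac12 E\,G_{vv}-F_v^2$ is already intrinsic. The limiting unit normal is $\nu_0=(\gamma_0'\times A)/|\gamma_0'\times A|$, and from the definitions one reads off the extrinsic expressions $\kappa_\nu=\det(\gamma_0',A,\gamma_0'')/(|\gamma_0'|^2\,|\gamma_0'\times A|)$ and (up to the universal constant in the definition of cuspidal curvature) $|\kappa_c|=|\det(\gamma_0',A,B)|\,|\gamma_0'|^{3/2}/|\gamma_0'\times A|^{5/2}$, the latter obtained by intersecting the surface with the plane $\Pi\perp\gamma_0'$ and computing the cuspidal curvature of the resulting $3/2$-cusp.

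The key step is then the observation that on the regular locus $\{v\neq0\}$ the Gaussian curvature $K$ is determined by the first fundamental form, hence so is the function $\Phi:=K\,(EG-F^2)$; but $\Phi=LN-M^2$ extends smoothly across $v=0$, so the full jet of $\Phi$ along the singular curve is intrinsic. Expanding the second fundamental form I would show $L(u,0)=\langle\gamma_0'',\nu_0\rangle$ and $M(u,0)=N(u,0)=0$, whence $\Phi$ vanishes on $\{v=0\}$ and
\[
 \partial_v\Phi|_{v=0}=\langle\gamma_0'',\nu_0\rangle\,N_v(u,0),
\]
the one delicate computation being
\[
 N_v(u,0)=\langle B,\nu_0\rangle+\langle A,\partial_v\nu|_{v=0}\rangle
 =\frac{\det(\gamma_0',A,B)}{2\,|\gamma_0'\times A|},
\]
which requires the first transverse derivative $\partial_v\nu|_{v=0}$. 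Since the left-hand side $\partial_v\Phi|_{v=0}$ is intrinsic, this particular combination of extrinsic triple products is intrinsic.

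Finally I would assemble the pieces. Combining the three extrinsic formulas yields the identity
\[
 |\kappa_c\kappa_\nu|=\frac{2\,\bigl|\,\partial_v\Phi|_{v=0}\,\bigr|}
 {|\gamma_0'|^{1/2}\,|\gamma_0'\times A|^{3/2}},
\]
and since $\partial_v\Phi|_{v=0}$ is intrinsic while the denominator is built only from $E(u,0)$, $F_v(u,0)$, $G_{vv}(u,0)$, the right-hand side depends solely on the first fundamental form. This proves that $|\kappa_c\kappa_\nu|$ is intrinsic. The absolute value is essential: the sign of $\partial_v\Phi|_{v=0}$, and the sign of each factor $\kappa_c$ and $\kappa_\nu$ separately, reverses under the orientation choices $v\mapsto-v$ and $\nu\mapsto-\nu$, which are not intrinsic, whereas the modulus is unambiguous.

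I expect the main obstacle to be the transverse bookkeeping of the second fundamental form—in particular obtaining $N_v(u,0)$ correctly through $\partial_v\nu|_{v=0}$, where the $1/(2|\gamma_0'\times A|)$ arises from the cancellation between $\langle B,\nu_0\rangle$ and $\langle A,\partial_v\nu|_{v=0}\rangle$—together with tracking the normalization constant hidden in the definition of $\kappa_c$, so that the factor $2$ and the exponents $\tfrac12,\tfrac32$ in the final identity come out exactly.
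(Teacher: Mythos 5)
Your argument is correct, but note first that this paper contains no proof of Fact~\ref{fact:prod}: it is quoted from \cite{MSUY}, so the comparison is with the cited source rather than with an argument in this text. Your mechanism---extending $\Phi:=K(EG-F^2)=LN-M^2$ smoothly across the edge (legitimate because $\nu$, hence $L,M,N$, is smooth for a front) and extracting $\partial_v\Phi|_{v=0}=L\,N_v$---is the same circle of ideas as in \cite{MSUY} and as condition (d) of the introduction: with $\lambda=\det(f_u,f_v,\nu)$ one has $\partial_v\Phi|_{v=0}=(K\lambda)|_{v=0}\,\lambda_v|_{v=0}$ and $\lambda_v(u,0)=|\gamma_0'\times A|$, so your first transverse derivative is exactly Kossowski's extended form $K\,d\hat A$ paired with an intrinsic factor. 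I verified your computations: with $\phi=A+\tfrac{v}{2}B+O(v^2)$ and $\nu=(f_u\times\phi)/|f_u\times\phi|$ one gets $\langle B,\nu_0\rangle=\det(\gamma_0',A,B)/|\gamma_0'\times A|$ and $\langle A,\nu_v|_{v=0}\rangle=-\det(\gamma_0',A,B)/\bigl(2|\gamma_0'\times A|\bigr)$, which indeed produce your $N_v(u,0)$ by partial cancellation; moreover your normalization of $\kappa_c$ agrees with \cite[(2.4)]{MSUY} (it reduces to \eqref{eq:kc} in adapted coordinates, where $|\gamma_0'|=|A|=1$ and $\gamma_0'\perp A$), and the exponents in your final identity $|\kappa_c\kappa_\nu|=2\,\bigl|\partial_v\Phi|_{v=0}\bigr|\,|\gamma_0'|^{-1/2}|\gamma_0'\times A|^{-3/2}$ check out exactly, since $\kappa_c\kappa_\nu=\det(\gamma_0',A,B)\det(\gamma_0',A,\gamma_0'')\,|\gamma_0'|^{-1/2}|\gamma_0'\times A|^{-7/2}$.

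Two small repairs. First, the cuspidal-edge criterion is the linear independence of the triple $\{\gamma_0',A,B\}$, i.e.\ $\det(\gamma_0',A,B)\neq 0$ (equivalently $\kappa_c\neq 0$), not merely of $A$ and $B$ as you wrote; the weaker condition does not even guarantee $|\gamma_0'\times A|\neq 0$, which your denominators need. Second, to close the argument under the paper's definition of ``intrinsic'' you should say explicitly that your coordinate normalization---singular set equal to $\{v=0\}$ and $f_v(u,0)=0$---is characterized by the first fundamental form alone ($\{v=0\}$ is the degeneracy locus of the metric, and $f_v(u,0)=0$ is equivalent to $\partial/\partial v$ spanning its kernel there), so that two germs inducing the same first fundamental form can be compared in a common such chart; then $K$, hence $\Phi$ on $\{v\neq 0\}$, hence by continuity the jet of $\Phi$ along $\{v=0\}$, coincide for the two germs, while the denominator is built from $E$, $F_v$ and $\tfrac12 G_{vv}$ at $v=0$ via $|\gamma_0'\times A|^2=\tfrac12 E\,G_{vv}-F_v^2$. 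With these two remarks added, your proof is complete and is, in substance, the argument of the cited source.
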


\begin{figure}[htbp]
 \centering
 \includegraphics[width=3.9cm]{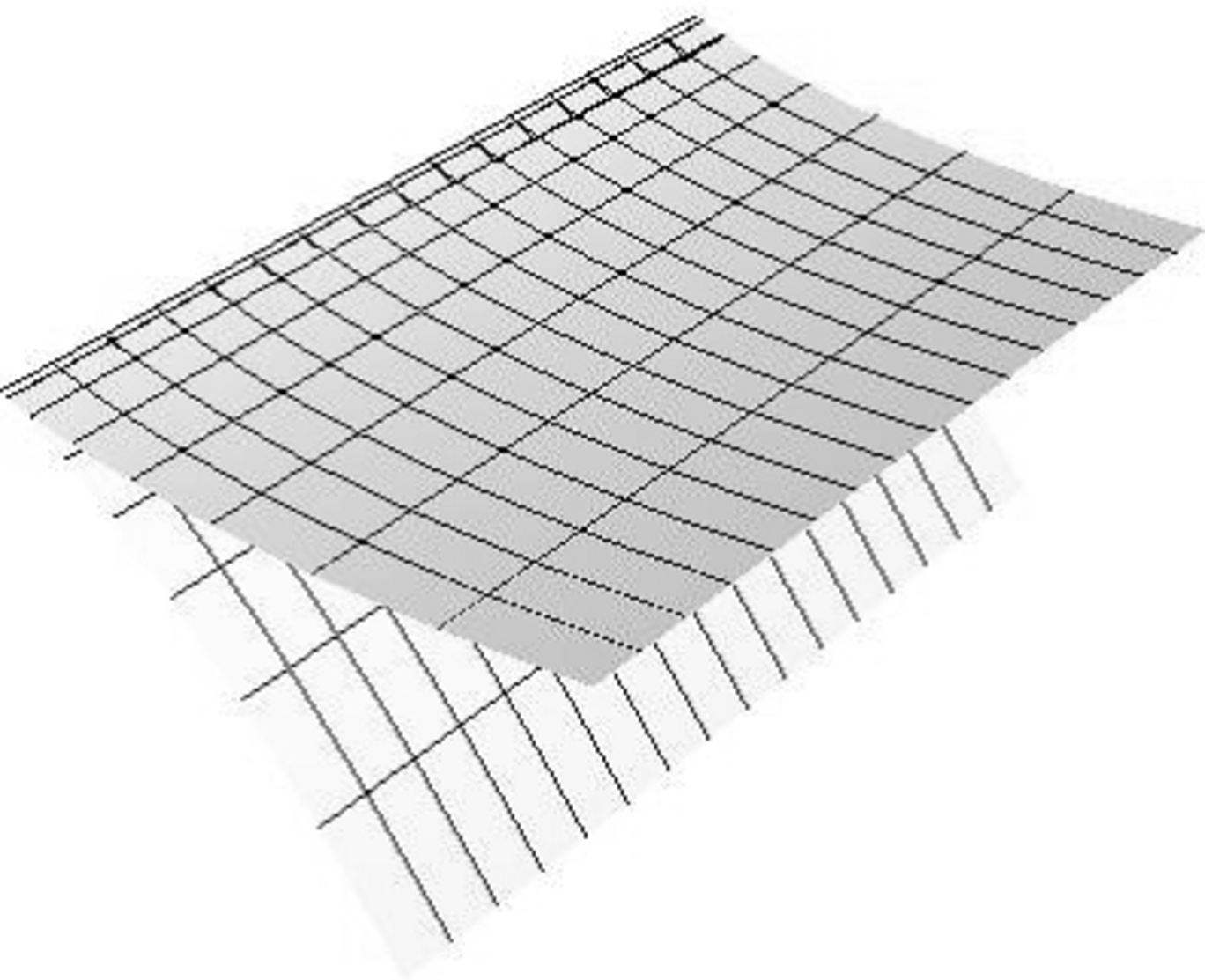}
 \qquad \quad 
 \includegraphics[width=3.9cm]{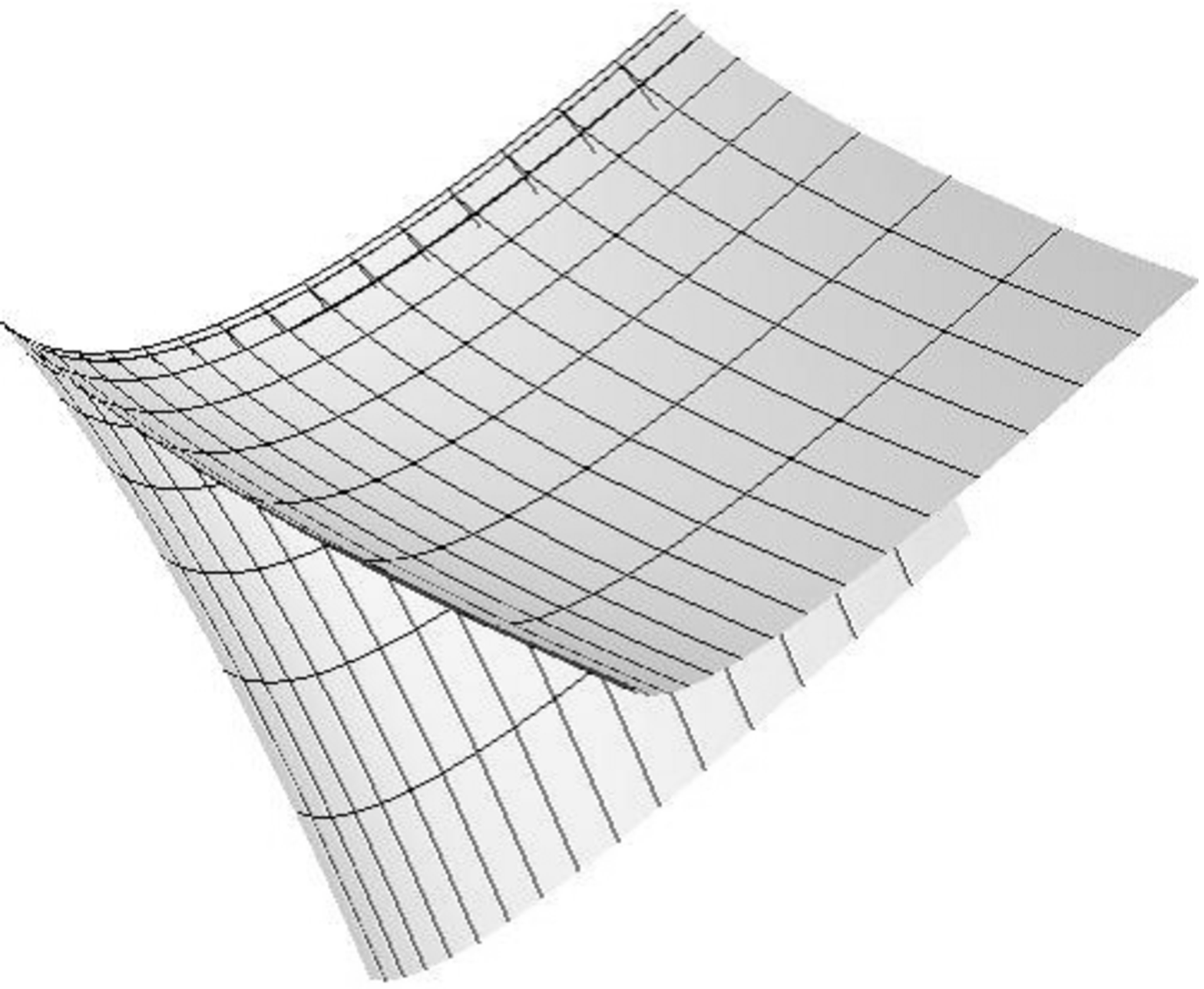} 
 \caption{%
    Cuspidal edges with $\kappa_\nu=0$ (left) 
    and $\kappa_\nu\ne 0$ (right)}
 \label{fig:generic}
\end{figure}

To prove our main result (cf.\ Theorem \ref{thm:A}),
this fact plays a crucial role.
A cuspidal edge singular point $p$ is
called \textit{generic} if the osculating plane
of $\hat \gamma$ is not orthogonal to $\nu$ at $p$,
that is, the limiting tangent plane does not
coincide with the osculating plane of $\hat \gamma$. 

The Gaussian curvature at a generic cuspidal edge
is unbounded (cf.\ \cite{MSUY}). 
Moreover, as shown in
\cite[Theorem A]{MSUY},
the following four conditions are equivalent:
\begin{itemize}
 \item[(a)] A cuspidal edge singular point
       is generic  (cf.\ Figure~\ref{fig:generic}).
 \item[(b)] The limiting normal curvature $\kappa_\nu$ does not vanish 
       at the singular point.
 \item[(c)] The inequality $\kappa>|\kappa_s|$
       holds (cf.\ \eqref{eq:kappa}).
 \item[(d)] Let $K$ be the Gaussian curvature
       and $d\hat A=\det(f_u,f_v,\nu)\,du\wedge dv$
       the signed area element of $f$, where $(U; u,v)$
       is a local coordinate system near
       the singular point $p$.
       Then $K\,d\hat A$ is well-defined on $U$ and
       does not vanish at $p$.
\end{itemize}
We denote by $\Cusp$ 
the set of \textit{real analytic} map germs of cuspidal edges 
$$
f\colon{}(\R^2,0)\to (\R^3,0)
$$
which is defined on a neighborhood of the origin in $\R^2$.
Moreover, let $\Cusp^*(\subset \Cusp)$ be the
set of map germs of generic cuspidal edges. 
In this paper, we show the following:
\begin{introthm}\label{thm:A}
 Let $\kappa_s(t)$ be the singular curvature
 function of the singular curve $\gamma(t)$
 of $f\in \Cusp^*$ such that $\gamma(0)=0$.
 Let $\sigma(t)$ be a real analytic regular space curve
 whose curvature function $\tilde\kappa(t)$
 satisfies 
 \begin{equation}\label{eq:A}
  \tilde\kappa(t)>|\kappa_s(t)|
 \end{equation}
 for all sufficiently small $t$.
 Then there exist at most two map germs
 $g\in \Cusp^*$ such that
 \begin{enumerate}
  \item the first fundamental form of $g$ coincides 
	with that of $f$, in particular, 
	the singular curve $\gamma(t)$ in the domain of $f$ is the same as that of $g$,
  \item $g(\gamma(t))=\sigma(t)$ holds for each $t$.
 \end{enumerate}
\end{introthm}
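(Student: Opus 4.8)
The plan is to reduce the construction of $g$ to a Cauchy problem along the singular curve and to solve it by the Cauchy-Kovalevskaya theorem, the two possible solutions corresponding to the two admissible signs of the limiting normal curvature. First I would fix a real analytic coordinate system $(u,v)$ on the domain in which the singular curve $\gamma$ is $\{v=0\}$ and $u$ is the parameter $t$, so that condition (2) reads $g(u,0)=\sigma(u)$ and the first fundamental form $ds^2=E\,du^2+2F\,du\,dv+G\,dv^2$ of $g$ is required to equal that of $f$. Since the singular curvature $\kappa_s$ is intrinsic, it is already prescribed by $ds^2$ and agrees with the given $\kappa_s(t)$; applying \eqref{eq:kappa} to $g$ and using that $\sigma$ has curvature $\tilde\kappa$ forces
\[
  \kappa_\nu(t)^2=\tilde\kappa(t)^2-\kappa_s(t)^2,
\]
which is positive by \eqref{eq:A}. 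Hence $\kappa_\nu$ is determined along $\gamma$ up to an overall sign, and then Fact \ref{fact:prod} determines $|\kappa_c|$ from the intrinsic quantity $|\kappa_c\kappa_\nu|$. This is the source of the bound ``at most two'': once the sign of $\kappa_\nu$ is chosen, no further discrete freedom remains.

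Next I would translate these data into an initial jet for $g$ along $\{v=0\}$. Writing $g$ through the adapted orthonormal frame of $\sigma$ (tangent, normal $\nu$, and their cross product), the Frenet data of $\sigma$ together with $\kappa_\nu$, $\kappa_c$ and $\kappa_s$ determine the position $g(u,0)=\sigma(u)$, the vanishing first transversal derivative $g_v(u,0)=0$ characteristic of a cuspidal edge, and the second transversal jet $g_{vv}(u,0)$ encoding the cuspidal direction and its opening. Thus, after a choice of sign, the entire formal boundary jet of a generic cuspidal edge with singular image $\sigma$ and the prescribed intrinsic invariants is pinned down. The isometric condition together with the Gauss-Codazzi (equivalently, integrability) equations for the induced metric then constitutes a real analytic system whose unknown is $g$ and whose Cauchy data on $\{v=0\}$ is exactly this jet; any $g$ as in the theorem must solve this system with one of the two jets.

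The hard part is that this Cauchy problem is singular precisely along the initial curve: because $g$ degenerates there ($g_v=0$), the first fundamental form is degenerate on $\{v=0\}$ and, as recorded in equivalence (d), the Gaussian curvature is unbounded, so $\{v=0\}$ is characteristic for the naive formulation and the Cauchy-Kovalevskaya theorem does not apply directly. To overcome this I would first put $f$, and hence the target metric, into the cuspidal-edge normal form in which the degeneracy of $ds^2$ is carried by explicit powers of $v$ (reflecting the $(u,v^2,v^3)$ model), and then rewrite the unknowns by factoring out these powers, for instance introducing the renormalized transversal field $g_v/v$, which is analytic and nonvanishing on $\{v=0\}$, together with the corresponding renormalized frame equations. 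After this desingularization the rewritten system becomes a genuine Cauchy-Kovalevskaya system for which $\{v=0\}$ is non-characteristic, so for each sign of $\kappa_\nu$ there is a unique real analytic solution realizing the prescribed boundary jet; this uniqueness is what yields the bound of two germs.

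Finally I would check that each solution does what is required: by construction it induces the prescribed first fundamental form (so it is isometric to $f$ and shares the singular curve $\gamma$, giving (1)) and satisfies $g(u,0)=\sigma(u)$ (giving (2)); and since $\kappa_\nu\neq 0$ by \eqref{eq:A}, the equivalence (a)$\Leftrightarrow$(b) guarantees that the singularity along $\{v=0\}$ is a \emph{generic} cuspidal edge, so $g\in\Cusp^*$. The principal obstacle throughout is the degeneracy at the singular curve: both the correct choice of renormalized unknowns that makes $\{v=0\}$ non-characteristic, and the verification that the reconstructed germ is an honest cuspidal edge rather than a more degenerate singularity, require the structure theory of cuspidal edges rather than the general isometric-embedding machinery.
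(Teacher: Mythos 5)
Your skeleton is genuinely the paper's: renormalize the degenerate direction by setting $\psi:=g_v/v$, recast the isometry conditions as a Cauchy--Kovalevskaya system along $\{v=0\}$ with data $g(u,0)=\sigma(u)$, $g_u(u,0)=\dot\sigma(u)$ and a unit vector $\psi(u,0)$ orthogonal to $\dot\sigma$ whose $\ddot\sigma$-component is prescribed by $\kappa_s$, with exactly two admissible choices $X^\pm_\sigma$ (the appendix lemma), corresponding to the sign of $\kappa_\nu$. But two steps are left with real gaps. First, the non-characteristicness is not achieved by ``factoring out powers of $v$'' alone: after the substitution one must actually solve for $\psi_v$, and the paper (Proposition \ref{prop:F}) does this from the three scalar identities $\psi_v\cdot\psi=\phi_v\cdot\phi$, $\psi_v\cdot g_u=\phi_v\cdot f_u$ and $\psi_v\cdot g_{uu}=(\phi\cdot f_{uu})_v-\tfrac{v}{2}(\phi\cdot\phi)_{uu}+v(\psi_u\cdot\psi_u)$, which requires the matrix $(\psi,g_u,g_{uu})$ to be invertible; along the initial curve $\det(\dot\sigma,X^\pm_\sigma,\ddot\sigma)=\mp\tilde\kappa\sqrt{1-(\kappa_s/\tilde\kappa)^2}$, nonzero precisely because \eqref{eq:A} is strict. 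This is where genericity enters the solvability itself, and your plan never identifies it. (Also, no Gauss--Codazzi equations are needed: the paper's system comes from differentiating the isometry identities directly; constructing a second fundamental form via integrability is Kossowski's approach, which the paper explicitly contrasts with its own.)

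Second, your verification that the solution lies in $\Cusp^*$ is circular as stated: the equivalence (a)$\Leftrightarrow$(b), like the formula \eqref{eq:kappa} you invoke to get $\kappa_\nu\neq0$, applies to points already known to be cuspidal edges, whereas what must be proved is that the CK solution is a front with cuspidal-edge singularities at all. The paper's mechanism (Proposition \ref{prop:ce}) is the missing idea: non-degeneracy and transversality of the null direction are read off from $\lambda=v\det(g_u,\psi,\tilde\nu)$, and the front condition $\tilde\nu_v\neq0$ follows from Fact \ref{fact:prod} --- since $|\kappa_c\kappa_\nu|$ is intrinsic, $\tilde\kappa_c\tilde\kappa_\nu=\kappa_c\kappa_\nu\neq0$, and $\tilde\kappa_c=2\det(g_u,\psi,\psi_v)\neq0$ forces $\tilde\nu_v\cdot\psi\neq0$. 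You cite Fact \ref{fact:prod} only to ``determine $|\kappa_c|$'' for the initial data, where it in fact plays no role: in adapted coordinates $g_{vv}(u,0)=\psi(u,0)$ is a unit vector pinned down by $\kappa_s$ alone, while $\kappa_c$ sits in $g_{vvv}$ and is an output of the solution, not Cauchy data. You flag the cuspidal-edge verification as requiring ``structure theory'' but supply no argument; that verification is a substantive part of the proof, and Fact \ref{fact:prod} is what makes it work.
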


It is classically known that for a given
planar curve $\gamma(t)$ 
having curvature function $\kappa(t)$, 
there are at most two develpable surfaces having 
``origami-singularities'' as a space curve
whose curvature function $\tilde \kappa(t)$
satisfies $\tilde \kappa(t)>\kappa(t)$
(see \cite{FT}).
The above theorem can be considered 
as an analogue of this classical phenomenon. 

Kossowski \cite{K} is 
the first geometer who considered the realizing problem of 
given first fundamental forms as generic wave fronts.
However, in \cite{K}, the isometric deformations of 
singularities were not discussed, 
and the above theorem can be considered as 
a refinement of \cite[Theorem 1]{K} in the case
of cuspidal edge singularities.
Since the intrinsic formulation of wave fronts
are rather technical, the statement of Theorem \ref{thm:A}
as a refinement of Kossowski's realization theorem 
will be given and proved in the final section (Section~\ref{sec:realize}).
Since Kossowski applied the Cauchy-Kovalevskaya theorem to
construct suitable second fundamental forms,
our approach is completely different from his, and
can be beneficial for the applications to isometric deformations.
However, it should be also remarked that Kossowski's appoach will work
for not only cuspidal edges but also other wave front singularities,
(for example, swallowtail singular points).

We get the following consequences of  Theorem \ref{thm:A}:

\begin{introcor}\label{cor:B}
 Each map germ $f\in \Cusp^*$
 admits an isometric deformation $($in $\Cusp^*)$
 which moves the limiting normal curvature $\kappa_\nu$.
 In particular, $\kappa_\nu$ and $\kappa_c$
 are not intrinsic invariants%
\footnote{
 In \cite{MSUY},
 it has been shown that $\kappa_c$ is an extrinsic
 invariant, 
 by construction the isometric deformation of 
ruled cuspidal edges satisfying  $\kappa_\nu=0$.
 }.
\end{introcor}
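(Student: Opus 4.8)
The plan is to deduce both assertions from Theorem~\ref{thm:A} by perturbing the space curve onto which the singular curve is mapped, and then reading off $\kappa_\nu$ and $\kappa_c$ from \eqref{eq:kappa} and Fact~\ref{fact:prod}. Since $f\in\Cusp^*$ is generic, we have $\kappa_\nu(0)\neq 0$, so by \eqref{eq:kappa} the curvature $\kappa_0$ of $\hat\gamma=f\circ\gamma$ satisfies $\kappa_0(0)>|\kappa_s(0)|$; by continuity this strict inequality holds for all small $t$. Moreover $\kappa_s$ is intrinsic, hence it is common to every germ sharing the first fundamental form of $f$.

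First I would build a family of target curves. Using the fundamental theorem of space curves, I would choose a real analytic one-parameter family $\sigma_\lambda(t)$ of regular space curves, depending real analytically on $\lambda$ near $0$, with $\sigma_0=\hat\gamma$ and $\sigma_\lambda(0)=0$, such that each curvature function $\tilde\kappa_\lambda$ still satisfies $\tilde\kappa_\lambda(t)>|\kappa_s(t)|$ for small $t$ while $\tilde\kappa_\lambda(0)$ is strictly monotone in $\lambda$; concretely, I would prescribe the curvature and torsion functions and integrate the Frenet equations. Applying Theorem~\ref{thm:A} to each $\sigma_\lambda$ yields germs $g_\lambda\in\Cusp^*$ with the first fundamental form of $f$ and $g_\lambda\circ\gamma=\sigma_\lambda$. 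Selecting, among the at most two germs, the branch with $g_0=f$ gives an isometric deformation $\{g_\lambda\}$ of $f$ in $\Cusp^*$.

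Next I would evaluate the invariants. All $g_\lambda$ share the first fundamental form, hence the intrinsic function $\kappa_s$; applying \eqref{eq:kappa} to $g_\lambda$ gives $\kappa_\nu(0)^2=\tilde\kappa_\lambda(0)^2-\kappa_s(0)^2$, which varies with $\lambda$ since $\tilde\kappa_\lambda(0)$ does (and stays positive, so genericity persists). Thus the deformation moves $\kappa_\nu$, showing $\kappa_\nu$ is not intrinsic. For $\kappa_c$, Fact~\ref{fact:prod} says $|\kappa_c\kappa_\nu|$ is intrinsic, hence constant along $\{g_\lambda\}$; since $|\kappa_\nu|$ is nonzero and nonconstant, $|\kappa_c|=|\kappa_c\kappa_\nu|/|\kappa_\nu|$ must also vary, so $\kappa_c$ is not intrinsic either.

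The step I expect to be the main obstacle is the consistent selection of a real analytically varying branch $g_\lambda$ among the at most two germs produced by Theorem~\ref{thm:A}: one must check that the construction behind Theorem~\ref{thm:A}, which proceeds via the Cauchy--Kovalevskaya theorem with data built from $\sigma_\lambda$ and the first fundamental form, depends real analytically on $\sigma_\lambda$, so that the branch through $f$ is a genuine deformation rather than a discrete pair of isometric germs. The locally constant sign of $\kappa_\nu$ (valid because $\kappa_\nu$ never vanishes) is what makes this branch well defined; the remaining steps are routine bookkeeping with \eqref{eq:kappa} and Fact~\ref{fact:prod}.
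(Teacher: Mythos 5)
Your proposal is correct and follows essentially the same route as the paper: the paper also perturbs only the curvature of the target curve (taking $\tilde\kappa = \kappa + s$ with the torsion $\tau$ fixed, via the fundamental theorem of space curves), applies Theorem~\ref{thm:A}, and uses the intrinsic nature of $\kappa_s$ together with \eqref{eq:kappa} to see that $|\kappa_\nu| = \sqrt{(\kappa(0)+s)^2 - \kappa_s(0)^2}$ moves, with the $\kappa_c$ statement following from Fact~\ref{fact:prod} exactly as you argue. The branch-selection issue you flag is handled in the paper simply by always choosing the $X^{+}_{\sigma^s}$ initial condition from Proposition~\ref{prop:ini} (which gives $g^0 = f$ after normalizing $\kappa_\nu > 0$, as in the proof of Corollary~\ref{cor:D}), and the paper does not dwell on analytic dependence on the parameter any more than you do.
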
 
A given map germ $f\in \Cusp$
is said to be {\it planar\/} (resp.\ {\it non-planar})
if the image $\hat \gamma$ of the singular curve of 
$f$ is contained in a plane
(resp.\ the torsion of $\hat \gamma$ does not equal to zero).
We show the following normalization
theorem of generic cuspidal edges: 
\begin{introcor}\label{cor:C}
 For each $f\in \Cusp^*$,
 there exists a unique map germ 
 $g\in \Cusp^*$
 of planar cuspidal edge singularities up to congruence
 such that
 \begin{itemize}
  \item $f$ and $g$ induce the same first fundamental form, and
  \item the curvature function of 
	$\hat \gamma(t):=f\circ \gamma(t)$ coincides
	with that of $g\circ \gamma(t)$, where $\gamma(t)$ 
	is the singular curve of $f$.
 \end{itemize}
 Moreover, there exists a real analytic 
 isometric deformation of $f$
 into $g$ preserving the 
 curvature function along the image of the
 cuspidal edge singularities. 
\end{introcor}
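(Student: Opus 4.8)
The plan is to deduce the corollary from Theorem~\ref{thm:A} together with the classical fundamental theorems for plane and space curves. Let $\kappa_s(t)$ be the singular curvature along the singular curve $\gamma(t)$; being intrinsic, it is determined by the first fundamental form of $f$. Let $\kappa(t)$ denote the curvature function of the space curve $\hat\gamma(t)=f\circ\gamma(t)$. Since $f\in\Cusp^*$ is generic, the limiting normal curvature $\kappa_\nu$ does not vanish along $\gamma$, so by \eqref{eq:kappa} the strict inequality $\kappa(t)>|\kappa_s(t)|$ holds for all small $t$; in particular $\kappa(t)>0$. By the fundamental theorem of plane curves there is a real analytic regular curve $\sigma_*(t)$, lying in a fixed plane $\Pi\subset\R^3$ and unique up to congruence of $\R^3$, whose curvature function equals $\kappa(t)$.

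To produce the planar germ $g$ and the required deformation simultaneously, I would interpolate between $\hat\gamma$ and $\sigma_*$ through space curves of fixed curvature. Let $\tau(t)$ be the torsion of $\hat\gamma$ and, for $s\in[0,1]$, let $\sigma_s(t)$ be the real analytic space curve with curvature $\kappa(t)$ and torsion $(1-s)\tau(t)$, normalized by matching initial Frenet data so that $\sigma_0=\hat\gamma$; then $\sigma_1$ is planar and congruent to $\sigma_*$, and $\sigma_s$ depends real-analytically on $s$ through the Frenet equations. For every $s$ the inequality $\kappa>|\kappa_s|$ persists, so the construction underlying Theorem~\ref{thm:A} applies to the pair $(f,\sigma_s)$ and yields a germ $f_s\in\Cusp^*$ inducing the first fundamental form of $f$ and satisfying $f_s\circ\gamma=\sigma_s$. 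Choosing the branch so that $f_0=f$, the family $\{f_s\}$ is a real analytic isometric deformation of $f$ whose singular image has constant curvature $\kappa(t)$, and its endpoint $g:=f_1$ is a planar generic cuspidal edge inducing the first fundamental form of $f$ with $g\circ\gamma$ of curvature $\kappa$. This establishes existence of $g$ and the deformation.

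For uniqueness up to congruence, suppose $g'\in\Cusp^*$ is any planar germ inducing the first fundamental form of $f$ with the curvature of $g'\circ\gamma$ equal to $\kappa$. Since $g'\circ\gamma$ is a planar curve of curvature $\kappa>0$ and hence of torsion $0$, it is congruent to $\sigma_*$; after applying a congruence of $\R^3$ I may assume $g'\circ\gamma=\sigma_*$, so $g'$ is one of the at most two germs provided by Theorem~\ref{thm:A} for $(f,\sigma_*)$. Let $R$ be the reflection of $\R^3$ across $\Pi$. As $R$ is an isometry fixing $\sigma_*$ pointwise, $R\circ g'$ also induces the first fundamental form of $f$ and satisfies $(R\circ g')\circ\gamma=\sigma_*$, so it is again one of these germs. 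Because a cuspidal edge germ is never contained in a plane, its image is not fixed by $R$, whence $R\circ g'\ne g'$. Thus the two germs of Theorem~\ref{thm:A} are exactly $g'$ and $R\circ g'$, which are congruent, proving that the planar germ is unique up to congruence.

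The main obstacle is the real-analytic dependence asserted in the construction of the family $\{f_s\}$: one must verify that the solution furnished by the Cauchy--Kovalevskaya argument in the proof of Theorem~\ref{thm:A} depends real-analytically on the analytic data $\sigma_s$ (equivalently on $s$), and that the discrete branch can be selected consistently so that $s\mapsto f_s$ is continuous with $f_0=f$. By contrast, the uniqueness step is easy once the reflection symmetry across $\Pi$ is observed. I would close by remarking that, since this deformation preserves both $\kappa_s$ (intrinsic) and $\kappa$, it also preserves $|\kappa_\nu|=\sqrt{\kappa^2-\kappa_s^2}$, in contrast to the deformation produced in Corollary~\ref{cor:B}.
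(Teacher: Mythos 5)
Your proposal is correct and is essentially the paper's own proof: the paper likewise interpolates through space curves $\sigma^s$ of fixed curvature $\kappa(t)$ and torsion $(1-s)\tau(t)$, applies Theorem~\ref{thm:A} at each $s\in[0,1]$ to get a family $g^{s,\pm}$ with $\sigma^1$ planar, and uses the reflection $T$ across the plane of $\sigma^1$ for the uniqueness statement. The only minor differences are in the uniqueness mechanism --- where you count solutions (showing $g'$ and $R\circ g'$ are two distinct solutions for the pair $(f,\sigma_*)$, hence exhaust the at-most-two germs of Theorem~\ref{thm:A}), the paper instead identifies $g^{1,-}=T\circ g^{1,+}$ directly via the appendix lemma, which gives $dT(X^{+}_{\sigma^1})=X^{-}_{\sigma^1}$, combined with the uniqueness part of the Cauchy--Kovalevskaya theorem --- and the real-analytic dependence on $s$ that you flag as the main obstacle is likewise left implicit in the paper (it follows from Cauchy--Kovalevskaya with analytic parameters, the initial data $X^{\pm}_{\sigma^s}$ being analytic in $s$), so your version is, if anything, slightly more explicit on both points.
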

In the statement of Corollary \ref{cor:C}, 
the condition $f\in\Cusp^*$ cannot be weakened to
$f\in\Cusp$ (cf.\ Remark \ref{rem:non-generic} and
Proposition \ref{prop:non-generic}). 
As a consequence, we also cannot weaken the condition
\eqref{eq:A} to $\tilde\kappa(t)\geq |\kappa_s(t)|$
in the statement of Theorem~\ref{thm:A}.

Two map germs $f,g\in \Cusp$ are said to be  {\it congruent\/}
if there exist an (orientation preserving or reversing)
isometry $T: (\R^3,0)\to (\R^3,0)$ and a local 
analytic diffeomorphism 
$\phi: (\R^2,0)\to (\R^2,0)$ such that $T\circ f\circ \phi=g$.
On the other hand,
two map germs $f,g\in \Cusp$ are said to be  
{\it strongly isometric\/} if there exist an isometry 
$T: (\R^3,0)\to (\R^3,0)$ and a local analytic diffeomorphism 
$\phi: (\R^2,0)\to (\R^2,0)$ satisfying the following
properties: 
\begin{itemize}
 \item $f\circ \phi$ and $g$ induce the same first fundamental form, and
 \item the restriction of $T\circ f\circ \phi$ to 
       the singular curve of $f$ coincides with that of $g$. 
\end{itemize}

\medskip
A regular space curve $\sigma$ passing through a point $x_0\in \R^3$
is called {\it symmetric\/}
if there exists an isometry $T$ of $\R^3$ 
which is not the identity map
such that $T({x_0})={x_0}$ and the image of
$\sigma$ is invariant under the action of $T$.
For example, the image of the singular curve
of generic planar cuspidal edges
are symmetric.
We get the following
duality theorem for generic cuspidal edges:
\begin{introcor}\label{cor:D}
 There exists an involution
 $\Cusp^*\ni f \mapsto \check f \in \Cusp^*$
 such that
 \begin{enumerate}
  \item $\check f$ is strongly isometric to $f$,
  \item if $g\in \Cusp^*$ is strongly isometric to $f$,
	then $g$ is congruent to $f$ or $\check f$.
 \end{enumerate}
 Moreover, $f$ is not congruent to $\check f$ if
 the image of the singular set of $f$ is non-symmetric 
 and non-planar.
\end{introcor}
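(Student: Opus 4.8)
The plan is to deduce Corollary~\ref{cor:D} from Theorem~\ref{thm:A} by analyzing the sign ambiguity of the limiting normal curvature. Fix $f\in\Cusp^*$ with singular curve $\gamma$, put $\hat\gamma=f\circ\gamma$, and let $\kappa$ be the curvature function of $\hat\gamma$. Since $f$ is generic, condition~(b) gives $\kappa_\nu\ne0$, hence $\kappa>|\kappa_s|$ by \eqref{eq:kappa}. The crucial observation is that any germ $g$ strongly isometric to $f$ \emph{with the same singular space curve} $\hat\gamma$ shares the intrinsic datum $\kappa_s$ and, having singular space curve $\hat\gamma$, also shares $\kappa$; by \eqref{eq:kappa} its limiting normal curvature therefore satisfies $\kappa_\nu^{g}=\pm\sqrt{\kappa^2-\kappa_s^2}=\pm\kappa_\nu$. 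As $\kappa_\nu$ is nowhere zero, real analyticity forces this sign to be globally constant, so there are exactly two admissible sign functions $+\kappa_\nu$ and $-\kappa_\nu$. I would define $\check f$ to be the germ realizing the first fundamental form of $f$ and the singular space curve $\hat\gamma$ but with limiting normal curvature $-\kappa_\nu$; the germ $f$ itself realizes $+\kappa_\nu$.

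For the \emph{existence} of $\check f$ I would invoke the construction underlying Theorem~\ref{thm:A}: feeding in the first fundamental form of $f$, the curve $\sigma=\hat\gamma$ (which satisfies \eqref{eq:A} because $\kappa>|\kappa_s|$), and the sign $-1$ produces a genuine germ, which lies in $\Cusp^*$ since $\kappa_\nu^{\check f}=-\kappa_\nu\ne0$ keeps $\check f$ generic. Property~(1) is then immediate: by construction $\check f$ has the same first fundamental form as $f$ and the same restriction $\hat\gamma$ to the singular curve, so it is strongly isometric to $f$ (with $T=\mathrm{id}$). Property~(2) is exactly the rigidity content of Theorem~\ref{thm:A}: if $g\in\Cusp^*$ is strongly isometric to $f$ via $(T,\phi)$, then after reparametrizing by $\phi$ and applying $T^{-1}$ one obtains a germ with the first fundamental form of $f$ and singular space curve $\hat\gamma$; Theorem~\ref{thm:A} says there are at most two such germs, namely $f$ and $\check f$, whence $g$ is congruent to $f$ or to $\check f$. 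The involution property $\check{\check f}\cong f$ holds because flipping the sign of $\kappa_\nu$ twice returns the datum $(\text{first fundamental form},\hat\gamma,+\kappa_\nu)$, whose $+\kappa_\nu$-realization is unique and equal to $f$.

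It remains to prove the \emph{Moreover} clause by contraposition: assuming $f$ is congruent to $\check f$, I would extract a symmetry of $\hat\gamma$. A congruence $T\circ f\circ\phi=\check f$ carries the singular space curve of $f$ onto that of $\check f$, so $T$ fixes the base point $\hat\gamma(0)=0$ and maps the image of $\hat\gamma$ onto itself. If $T\ne\mathrm{id}$, this is precisely the definition of $\hat\gamma$ being symmetric (the planar case being the instance where $T$ is the reflection across the plane containing $\hat\gamma$). If $T=\mathrm{id}$, then $f$ and $\check f$ are related by an orientation-preserving congruence and hence share the same $\kappa_\nu$; combined with $\kappa_\nu^{\check f}=-\kappa_\nu$ this forces $\kappa_\nu\equiv0$, contradicting genericity, unless the induced reparametrization of $\gamma$ is orientation-reversing, which again exhibits $\hat\gamma$ as symmetric. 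In every case $\hat\gamma$ is symmetric or planar; contrapositively, if the singular image is non-symmetric and non-planar then $f$ is not congruent to $\check f$.

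The main obstacle, I expect, is not the rigidity bookkeeping above---which is a clean corollary of Theorem~\ref{thm:A}---but the \emph{existence} of the opposite-sign realization $\check f$ for a general, non-symmetric, non-planar singular curve. For symmetric or planar $\hat\gamma$ one can produce $\check f$ concretely as $R\circ f$ for a suitable reflection $R$ that fixes the image of $\hat\gamma$ and reverses orientation, flipping $\kappa_\nu$ while preserving both the first fundamental form and $\hat\gamma$; but for a generic curve no such global reflection exists, and one must instead rely on the solvability of the system in the proof of Theorem~\ref{thm:A} with the sign of $\kappa_\nu$ chosen freely. Care is likewise needed to track how $\kappa_\nu$ transforms under orientation-reversing isometries and under reparametrizations of $\gamma$, since these are exactly the ambiguities that the notions of congruence, strong isometry, and symmetry are designed to absorb.
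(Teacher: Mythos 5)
Your construction of $\check f$ is essentially the paper's: you apply the solvability of Theorem~\ref{thm:A} to $\sigma=\hat\gamma$ itself, and your two sign classes $\pm\kappa_\nu$ are exactly the paper's two initial vectors $X^{\pm}_{\hat\gamma}$ of Proposition~\ref{prop:ini}, since the sign of $\det(\dot\sigma,X^{\pm}_\sigma,\ddot\sigma)$ is the sign of the limiting normal curvature of the resulting germ. The paper likewise identifies $g^{+}=f$ (via $X^{+}_{\hat\gamma}(t)=\phi(t,0)$ and Cauchy--Kovalevskaya uniqueness) and sets $\check f:=g^{-}$; your treatment of properties (1) and (2) and of involutivity is at the same level of detail as the paper's. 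Your observation that for $T\neq\mathrm{id}$ the congruence immediately exhibits the symmetry of $\hat\gamma$ (since $f$ and $\check f$ have the \emph{same} singular image and $T$ fixes the origin) is in fact a small shortcut the paper does not take.

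The genuine gap is in your handling of the case $T=\mathrm{id}$ in the ``Moreover'' clause, and it is twofold. First, your fallback ``unless the induced reparametrization of $\gamma$ is orientation-reversing, which again exhibits $\hat\gamma$ as symmetric'' is false: the paper's definition of a symmetric curve requires a nontrivial ambient isometry preserving the image, and an orientation-reversing reparametrization with $T=\mathrm{id}$ supplies no such isometry. What actually happens in that subcase is $\check f(u,0)=f(-u,0)$, i.e.\ $\hat\gamma(u)=\hat\gamma(-u)$, which contradicts the regularity of $\hat\gamma$ --- the case is vacuous, but for a different reason than you give. Second, your claim that an orientation-preserving congruence forces $\kappa_\nu^{g}=\kappa_\nu$ silently ignores the reparametrization: by \eqref{eq:knu}, in adapted coordinates $\kappa_\nu=\det(f_{uu},f_u,f_{vv})$ changes sign under $u\mapsto -u$, so the sign of $\kappa_\nu$ is \emph{not} a congruence invariant without pinning down how the diffeomorphism acts along the singular curve. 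This is precisely the content of the paper's Lemma~\ref{lem:xu} (showing $\eta(u,0)=0$, $\xi_v(u,0)=0$, $\xi(u,0)=\epsilon u$), after which non-planarity is used --- via the sign change of torsion under orientation-reversing isometries --- to force $T$ to be orientation-preserving, and the determinant computation with $\det(\dot{\hat\gamma}(0),X^{-}_{\hat\gamma}(0),\ddot{\hat\gamma}(0))<0$ pins $\xi(u,0)=-u$, whence the symmetry. Your proposal flags this bookkeeping as ``care needed'' but does not carry it out; without it the $T=\mathrm{id}$ branch of your argument does not close, so the final clause requires the Lemma-style computation the paper supplies.
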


We call $\check f$ the {\it isomer\/} of $f$.
For non-generic cuspidal edges, the existence of
isomers cannot be expected in general
(see Proposition \ref{prop:non-generic2}).
The proofs of these results are accomplished by
an appropriate modification of the 
proof of the $2$-dimensional case of the Janet-Cartan theorem.
In \cite{HHNUY}, we defined `normal cross caps',
expecting a similar normalization theorem as in
Corollary~\ref{cor:C}.
However, it seems difficult to apply the same technique, 
because cross cap singularities are isolated.

\section{Preliminaries}
The fundamental tool to prove our results is the following 
fact (cf.\ \cite[pages 37--38]{Sp}):

\begin{fact}[Cauchy-Kovalevskaya theorem]
\label{fact:ck}
 Let
 \[
    x^i_v(u,v)=\Phi^i(u,v,x^1,\dots,x^k,x^1_u,\dots,x^k_u)
     \qquad (i=1,\dots,k)
 \]
 be a partial differential equation having $x^i:=x^i(u,v)$ $(i=1,2,\dots,k)$
 as unknown functions, where 
 $\Phi:=(\Phi^1,\dots,\Phi^k)$ is a real analytic map
 and
 \[
    x^i_u:=\frac{\partial x^i}{\partial u},\qquad
    x^i_v:=\frac{\partial x^i}{\partial v}\qquad
    (i=1,\dots,k).
 \]
 This equation has a unique real analytic solution
 $x=(x^1,\dots,x^k)$ with an initial condition
 \[
   x^i(u,0)=w^i(u)\qquad (i=1,\dots,k),
 \]
 where $w^i$ $(i=1,\dots,k)$
 are given real analytic functions.
\end{fact}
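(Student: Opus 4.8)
The plan is to recognize this statement as the Cauchy--Kovalevskaya theorem in its first-order ``evolution'' normal form, with $v$ playing the role of time, and to prove it by the classical two-step scheme: first produce a \emph{formal} power series solution whose coefficients are forced uniquely by the equation and the initial data, and then establish convergence of that series by the \emph{method of majorants}. At the outset I would normalize the problem. Replacing $x^i$ by $y^i := x^i - w^i(u)$ turns the initial condition into $y^i(u,0)=0$ while keeping the right-hand side real analytic (now with $w$ and $w'$ absorbed into the new $\Phi$); after a translation we may assume all data are centered at the origin and that $\Phi$ is analytic on a neighborhood of the relevant base point. If one further wishes to remove the explicit $(u,v)$-dependence, one can adjoin two dummy unknowns governed by $x^{k+1}_v=0,\ x^{k+2}_v=1$ with the obvious initial values, reducing $\Phi$ to a function of the unknowns and their $u$-derivatives alone; I regard this only as a cosmetic normalization.

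For \textbf{uniqueness and the formal solution}, I would expand each unknown as $x^i(u,v)=\sum_{a,b\ge 0} c^i_{ab}\,u^a v^b$. The initial condition fixes the coefficients with $b=0$. Differentiating the relation $x^i_v=\Phi^i(u,v,x^{\,\bullet},x^{\,\bullet}_u)$ repeatedly in $v$ and evaluating at $v=0$ expresses every coefficient $c^i_{a,b+1}$ as a universal polynomial, with \emph{non-negative integer} coefficients, in the Taylor coefficients of $\Phi$ and in the previously determined $c^j_{a',b'}$ with $b'\le b$. Hence all Taylor coefficients are determined inductively and without ambiguity. Since a real analytic function is determined by its Taylor series, this already proves that there is at most one real analytic solution, and it produces the only possible candidate.

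The decisive step is \textbf{convergence}, handled by majorants. Because $\Phi$ is analytic near the base point, its Taylor coefficients satisfy Cauchy estimates, so each $\Phi^i$ is dominated, coefficient by coefficient, by a single explicit majorant of the form
\[
 \Phi^i \ll \frac{M}{\,1-\rho^{-1}\bigl(u+v+\textstyle\sum_{j}(p_j+q_j)\bigr)\,},
\]
where $p_j,q_j$ stand for the slots occupied by $x^j,x^j_u$ and $M,\rho>0$ are constants. I would then consider the associated \emph{majorant problem} $Y_v=$ (the above with $p_j=Y^j$, $q_j=Y^j_u$) and $Y^i(u,0)=0$. By the same recursion as above---now with all signs non-negative---the Taylor coefficients of the majorant solution dominate the absolute values of the $c^i_{ab}$ term by term. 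The point of this particular majorant is that the majorant problem can be solved in closed form: by the symmetry of the right-hand side all components coincide, and the resulting scalar problem reduces to an ordinary (implicit) differential equation that integrates explicitly to a function analytic at the origin. Consequently its Taylor series converges in a neighborhood of the origin, and by domination so does the formal series for $x$.

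Finally I would check that the limit is a genuine solution: on the common domain of convergence, term-by-term differentiation and substitution into the analytic map $\Phi$ are legitimate, so the formal identity $x^i_v=\Phi^i(\dots)$ becomes an identity of convergent power series, and $x^i(u,0)=w^i(u)$ holds by construction. Combined with the uniqueness established above, this yields the unique real analytic solution. \textbf{I expect the main obstacle to be the majorant step}: choosing a dominating function that is simultaneously a term-by-term majorant of $\Phi$ and explicitly integrable, and then verifying carefully that the induced recursion really dominates the original one coefficientwise---the sign bookkeeping in the universal polynomials is where an error would most easily hide.
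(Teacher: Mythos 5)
Your outline is correct: the reduction to zero initial data, the inductive determination of the Taylor coefficients by universal polynomials with non-negative coefficients, and the convergence proof via a term-by-term majorant whose associated problem collapses by symmetry to an explicitly integrable scalar equation is precisely the classical proof of the Cauchy--Kovalevskaya theorem, and you correctly flag the choice and integration of the majorant as the only delicate point. Note that the paper itself offers no proof of this statement---it is quoted as a known Fact with a citation to Spivak, pages 37--38, where essentially this same power-series-plus-majorants argument is carried out---so your approach coincides with that of the cited source.
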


The classical Janet-Cartan theorem for 
the existence of local isometric embeddings of
real analytic 
Riemannian $2$-manifolds can be proved as an application 
of this fact (cf.\ Chapter 11 of \cite{Sp}).
Our main results are also proved by applying Fact \ref{fact:ck}  using
the following special coordinate system along cuspidal edges:

\begin{definition}\label{def:adapted}
 Let $f:\Sigma^2\to \R^3$ be a real analytic map
 and $p\in \Sigma^2$ be a cuspidal edge singular point
 of $f$. 
 (We can take a real analytic
 unit normal vector field $\nu$  
 defined on a neighborhood of
 $p$.)  
 A real analytic local coordinate system $(u,v)$ at $p$
 is called {\it adapted\/}  if it satisfies
 the following properties along the $u$-axis:
 \begin{enumerate}
  \item\label{item:adapted:1}
       $|f_u| = 1$,
  \item\label{item:adapted:2}
       $f_v=0$, in particular, the singular set 
       is contained in
       the $u$-axis,
  \item\label{item:adapted:3}
       $\{f_u, f_{vv},\nu\}$ is an
       orthonormal basis which is compatible with respect to
       the orientation of $\R^3$.
 \end{enumerate}
\end{definition}

The existence of an adapted coordinate system was
shown in \cite[Lemma 3.2]{SUY}.
Throughout this paper, we fix a 
real analytic map 
\[
   f:(U;u,v)\longrightarrow \R^3
\]
defined on a domain $U\subset\R^2$ of the $uv$-plane
which has a generic cuspidal edge singular point
at the origin $(0,0)$, and assume that $(u,v)$ is an adapted coordinate
system.
Since $(u,v)$ is an adapted coordinate system,
the {\it limiting normal curvature\/} $\kappa_\nu$ of $f$ is given by
(cf.\ Equation (3.11) in \cite{SUY}) 
\begin{equation}
\label{eq:knu}
  \kappa_\nu(u):=f_{uu}(u,0)\cdot \nu(u,0)
   =\det(f_{uu}(u,0),f_u(u,0), f_{vv}(u,0)),
\end{equation}
where the dot \lq$\cdot$\rq\ is the inner product
in $\R^3$.
Since $f_v=0$ along the $u$-axis, there exists a 
real analytic map germ $\phi$
such that
\begin{equation}\label{eq:phi}
 f_v(u,v)=v \phi(u,v),\qquad
 f_{vv}(u,0)=\phi(u,0)
\end{equation}
hold on a neighborhood of the $u$-axis.

On the other hand, let 
$g:(U;u,v)\to \R^3$ be another real analytic map
which has a generic cuspidal edge singular point
at $(0,0)$, and $(u,v)$ an adapted coordinate system.
Similarly, there exists a  real analytic map germ $\psi$ such that
\begin{equation}\label{eq:psi}
 g_v(u,v)=v \psi(u,v),\qquad
 g_{vv}(u,0)=\psi(u,0)
\end{equation}
hold on a neighborhood of the $u$-axis.
\begin{proposition}\label{prop:F}
 Let $f$ and $g$ be as above, 
 and $\varphi$ and $\psi$ as in \eqref{eq:phi} and \eqref{eq:psi}.
 Suppose that the first fundamental form of $g$ 
 coincides with that of $f$, 
 then there exists a real analytic map
 $\F:U\times\R^3\times \GL_3(\R)\to \M_3(\R)$
 such that
 \[
    (g_v,r_v,\psi_v)=\F(u,v;\,\psi_u,(\psi,g_u,r_u)),
 \]
 where $r(u,v):=g_u(u,v)$
 and 
 $\M_3(\R)$ $($resp.\ $\GL_3(\R))$ is the 
 set of $3\times 3$-matrices 
 $($resp.\ the set of regular $3\times 3$-matrices$)$.
\end{proposition}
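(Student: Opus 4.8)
The plan is to exhibit each column of the output matrix as a real analytic function of $(u,v)$, the vector $\psi_u$, and the frame $(\psi,g_u,g_{uu})$. Two of the columns are immediate. By the definition of $\psi$ we have $g_v=v\psi$, which is $v$ times the first column of the input matrix; and differentiating $g_v=v\psi$ in $u$ gives $g_{uv}=v\psi_u$, so $r_v=g_{uv}=v\psi_u$. Both are visibly real analytic in the stated arguments, and all the work lies in producing $\psi_v$.

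The idea for $\psi_v$ is to recover it from its three components against the basis $\{\psi,g_u,\nu_g\}$, where $\nu_g:=(g_u\times\psi)/|g_u\times\psi|$ is the unit normal of $g$ (well defined and real analytic since the invertibility of $(\psi,g_u,g_{uu})$ forces $g_u,\psi$ to be independent). Because $g_v=v\psi$ and $f$ and $g$ share the first fundamental form, the functions $\tilde F:=g_u\cdot\psi$ and $\tilde G:=\psi\cdot\psi$ coincide with real analytic functions of $(u,v)$ computed from $f$; differentiating the identities $\psi\cdot\psi=\tilde G$ and $g_u\cdot\psi=\tilde F$ in $v$ and substituting $g_{uv}=v\psi_u$ yields the two tangential components
\[
 \psi_v\cdot\psi=\tfrac12\,\tilde G_v,\qquad
 \psi_v\cdot g_u=\tilde F_v-v\,(\psi_u\cdot\psi),
\]
each real analytic in $(u,v;\psi_u,\psi,g_u)$.

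For the normal component I would use the Gauss equation. Writing $L:=g_{uu}\cdot\nu_g$, $M:=g_{uv}\cdot\nu_g$, $N:=g_{vv}\cdot\nu_g$, the relations $g_{uv}=v\psi_u$ and $g_{vv}=\psi+v\psi_v$ together with $\psi\cdot\nu_g=0$ give $M=v(\psi_u\cdot\nu_g)$ and $N=v(\psi_v\cdot\nu_g)$, hence
\[
 LN-M^2=v\bigl(L\,(\psi_v\cdot\nu_g)-v\,(\psi_u\cdot\nu_g)^2\bigr).
\]
On the other hand $LN-M^2=K(EG-F^2)$ depends only on the first fundamental form (Theorema Egregium), so it equals the same quantity computed from $f$; since for $f$ the coefficients $M,N$ each carry a factor of $v$, the function $\hat{\mathcal K}:=(LN-M^2)/v$ is real analytic in $(u,v)$ and intrinsic. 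The invertibility of $(\psi,g_u,g_{uu})$ also forces $g_{uu}\notin\operatorname{span}\{g_u,\psi\}$, i.e.\ $L\ne 0$ (this is exactly the genericity $\kappa_\nu\ne 0$, and is why the domain of $\F$ is $\GL_3(\R)$). Dividing the displayed identity by $v$ and then by $L$ gives
\[
 \psi_v\cdot\nu_g=\frac{\hat{\mathcal K}+v\,(\psi_u\cdot\nu_g)^2}{L},
\]
real analytic in $(u,v;\psi_u,(\psi,g_u,g_{uu}))$.

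Finally I would reconstruct $\psi_v$ from the three inner products $\psi_v\cdot\psi$, $\psi_v\cdot g_u$, $\psi_v\cdot\nu_g$ by inverting the Gram matrix of $\{\psi,g_u,\nu_g\}$, which is positive definite (hence invertible, with $\nu_g$ decoupled from $g_u,\psi$). Assembling this formula with $g_v=v\psi$ and $r_v=v\psi_u$ defines the required real analytic map $\F$. The main obstacle is the normal component: one must recognize $LN-M^2$ as intrinsic through the Gauss equation and, crucially, verify that $(LN-M^2)/v$ is genuinely real analytic and that $L\ne 0$ --- this is precisely where the structure of generic cuspidal edges, and the $\GL_3(\R)$ (equivalently $\kappa_\nu\ne 0$) hypothesis, enter.
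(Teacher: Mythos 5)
Your proposal is correct, and for the crucial third column it takes a genuinely different route from the paper. The easy columns $g_v=v\psi$, $r_v=v\psi_u$ and your two tangential components agree in substance with the paper's: $\psi_v\cdot\psi=\tfrac12(\psi\cdot\psi)_v=\phi_v\cdot\phi$ is \eqref{e:4}, and your $\tilde F_v-v(\psi_u\cdot\psi)$ equals the paper's $\phi_v\cdot f_u$ from \eqref{eq:a2}. The divergence is in the frame and the key identity: you complete $\{\psi,g_u\}$ by the unit normal $\nu_g$ and pin down $\psi_v\cdot\nu_g$ via the Gauss equation, i.e.\ the intrinsicity of $LN-M^2$, plus the observations that $LN-M^2$ is divisible by $v$ and that $L\neq 0$ encodes genericity ($\kappa_\nu\neq0$). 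The paper instead works in the frame $(\psi,g_u,r_u)=(g_{vv},g_u,g_{uu})$ and obtains the third inner product \eqref{eq:a3} by bare differentiation of first-fundamental-form identities --- first $g_{uu}\cdot\psi=f_{uu}\cdot\phi$ (cf.\ \eqref{eq:*}), then $\psi\cdot g_{uuv}=\tfrac{v}{2}(\phi\cdot\phi)_{uu}-v\,\psi_u\cdot\psi_u$ from $\tfrac12(g_v\cdot g_v)_{uu}-g_{uv}\cdot g_{uv}$ --- never invoking curvature or the normal vector; genericity enters only to invert the matrix $(\psi,g_u,g_{uu})$, which is the same nondegeneracy as your $L\neq0$. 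The information content is equivalent (your Gauss-equation relation is \eqref{eq:a3} rewritten in the orthogonal frame), but the trade-offs differ: your version makes the geometry transparent --- the normal component of $\psi_v$ is the second fundamental form, which explains conceptually why the domain must be $\GL_3(\R)$ --- whereas the paper's version keeps $\F^3$ in the clean form $((y_1,y_2,y_3)^T)^{-1}(\cdots)$, polynomial in the unknown arguments, and reuses its three scalar relations verbatim as \eqref{e:4}--\eqref{e:6} in the converse direction of the proof of Theorem~\ref{thm:A}. One small point you should make explicit: Theorema Egregium gives $L_gN_g-M_g^2=L_fN_f-M_f^2$ a priori only on $\{v\neq0\}$, since on the singular curve $EG-F^2=0$ and $K$ is unbounded; the equality along $v=0$ --- and hence the legitimacy of dividing the resulting identity by $v$ and evaluating at $v=0$ --- requires a one-line appeal to real analyticity (or continuity) of both sides, which is available here and is exactly the complication the paper's direct computation sidesteps.
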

\begin{proof}
 Since $f$ and $g$ have the same first fundamental
 form on the same local coordinate system, we have
 \begin{equation}\label{eq:i1}
  f_u\cdot f_u=g_u\cdot g_u,\quad
  f_u\cdot f_v=g_u\cdot g_v,\quad
  f_v\cdot f_v=g_v\cdot g_v,
 \end{equation}
 that reduce to
 \begin{equation}\label{eq:i2}
   f_u\cdot f_u=g_u\cdot g_u,\quad
   f_u\cdot\phi=g_u\cdot \psi,\quad
   \phi\cdot \phi=\psi\cdot \psi.
 \end{equation}
 We define $\F:=(\F^1,\F^2,\F^3)$ by
 \begin{equation}\label{eq:F}
 \begin{aligned}
  &\F^1(u,v;\, {x},({y}_1,{y}_2,{y}_3)):= 
      v {y}_1,\\
  &\F^2(u,v;\, {x},({y}_1,{y}_2,{y}_3)):= v {x},\\
  &\F^3(u,v;\,x,(y_1,y_2,y_3))\\
  &\hphantom{\F^3(u,v;\,)}
   :=
   ((y_1,y_2,y_3)^T)^{-1}
   \pmt{\phi_v\cdot \phi\\ 
          \phi_v \cdot f_u\\[6pt]
          (\phi\cdot f_{uu})_v
          -\dfrac{v}2{(\phi\cdot \phi)_{uu}}+v (x\cdot x)
   },
 \end{aligned}
 \end{equation}
where $x\in \R^3$, 
 $(y_1,y_2,y_3)\in \GL_3(\R)$
 and $(y_1,y_2,y_3)^T$ is the transpose of 
 a regular matrix $(y_1,y_2,y_3)$.
 Since 
 \[
    g_v=v \psi,\qquad
    r_v=g_{uv}=(v \psi)_u=v \psi_u,
 \]
 it holds that
 \[
    \F^1(u,v;\, \psi_u,(\psi,g_u,r_u))=v \psi=g_v,\quad
    \F^2(u,v;\, \psi_u,(\psi,g_u,r_u))=v \psi_u=r_v.
 \]
 The relation (cf.\ \eqref{eq:phi} and \eqref{eq:psi})
 \[
    v(g_u\cdot \psi)=g_u\cdot g_v=f_u\cdot f_v=v(f_u\cdot \phi)
 \]
 reduces to
 \begin{equation}\label{eq:a1}
  g_u\cdot \psi=f_u\cdot \phi.
 \end{equation}
 Since (cf.\ \eqref{eq:phi} and \eqref{eq:i2})
 \begin{align*}
   v(\psi\cdot g_{uu})
     &=(g_v\cdot g_{uu})=(g_v\cdot g_u)_u-g_{uv}\cdot g_u \\
     &=(g_v\cdot g_u)_u-
         \frac{1}{2}(g_u\cdot g_u)_v
      =(f_v\cdot f_u)_u-
         \frac{1}{2}(f_u\cdot f_u)_v
      =v (\phi\cdot f_{uu}), 
 \end{align*}
 it holds that
 \begin{equation}\label{eq:*}
    g_{uu}\cdot \psi=f_{uu}\cdot\phi.
 \end{equation}
 On the other hand, by \eqref{eq:a1} and \eqref{eq:i2},
 we have that
 \begin{align}\label{eq:a2}
  \psi_v\cdot g_u
    &=(\psi\cdot g_u)_v-(\psi\cdot g_{uv})
     =(\phi\cdot f_u)_v-\frac1v g_v\cdot g_{uv}\\
    &=(\phi\cdot f_u)_v-\frac1{2v} (g_v\cdot  g_{v})_{u}
     =(\phi\cdot f_u)_v-\frac1{2v} (f_v\cdot f_{v})_u
      \nonumber \\
    &=(\phi\cdot  f_u)_v-\frac1{v} f_v\cdot  f_{uv}  
     =\phi_v\cdot f_u. \nonumber
 \end{align}
 Since
 \begin{align*}
    v(\psi\cdot g_{uuv})
    &=g_v\cdot g_{uuv}=(g_v\cdot g_{uv})_u-g_{uv}\cdot g_{uv}
     =\frac{(g_v\cdot g_{v})_{uu}}2-g_{uv}\cdot g_{uv} \\
    &=\frac{(f_v\cdot f_{v})_{uu}}2-g_{uv}\cdot g_{uv}
     =v^2\frac{(\phi\cdot \phi)_{uu}}2-v^2 \psi_{u}\cdot \psi_{u},
 \end{align*}
 we have 
 \begin{equation}\label{eq:b1}
  \psi\cdot g_{uuv}=\frac12v(\phi\cdot \phi)_{uu}-v(\psi_{u}\cdot \psi_{u}).
 \end{equation}
 Here, \eqref{eq:*} yields
 \begin{equation}\label{eq:b2}
  \psi\cdot g_{uuv}=(\psi\cdot g_{uu})_v-\psi_v\cdot g_{uu}
                   =
                      (\psi\cdot g_{uu})_v-\psi_v\cdot r_u.
 \end{equation}
 By \eqref{eq:b1} and \eqref{eq:b2}, we have 
 \begin{equation}\label{eq:a3}
  \psi_v\cdot r_{u}
   = 
   (\phi\cdot f_{uu})_v
     -\frac{v}2{(\phi\cdot \phi)_{uu}}+v (\psi_u\cdot \psi_u).
 \end{equation}
 The space curve $\sigma(u):=g(u,0)$
 parametrizes the image of the singular set of $g$.
 Since the cuspidal edge singularities 
of $g$ are generic, the osculating plane $\Pi$
 of the space curve $\sigma(u)$ is independent of the tangential
 direction $\psi(u,0)=g_{vv}(u,0)$ of $g$ 
 (cf.\ \eqref{eq:psi}, Definition~\ref{def:adapted}).
 Since $\Pi$ is spanned by $\{g_{u}(u,0),g_{uu}(u,0)\}$,
 the matrix 
 \[
   (\psi(u,0),g_u(u,0),r_u(u,0))
       =(g_{vv}(u,0),g_{u}(u,0),g_{uu}(u,0))
 \]
 is regular. 
 Hence,
 \eqref{eq:a1}, \eqref{eq:a2} and \eqref{eq:a3}
 yield that
 \[
    \F^3(u,v;\, \psi_u,(\psi,g_u,r_u))=\psi_v,
 \]
 in particular, $\F=(\F^1,\F^2,\F^3)$
 attains the desired real analytic map.
\end{proof}

\section{Proof of the main results}
Let $f\colon(U;u,v)\to\R^3$ be as in the previous section.
Then the space curve defined by
\[
    \hat\gamma(t):=f(t,0)
\]
gives a parametrization of the image of the singular curve
$\gamma(t)=(t,0)$ of $f$.
To prove Theorem~\ref{thm:A} in the introduction, 
we prepare the following assertion:

\begin{proposition}\label{prop:ini}
 Let $\sigma(t)$ be a regular space curve satisfying \eqref{eq:A}
 such that $t$ is the arclength parameter.
 Then  there exists a unique $\R^3$-valued vector field $X^{+}_{\sigma}(t)$ 
 $($resp. $X^{-}_{\sigma}(t))$
 along $\sigma$ satisfying the following properties:
 \begin{enumerate} 
  \item\label{item:w1} 
       $|X^{+}_{\sigma}(t)|=1$ 
       {\rm(}resp.\ $|X^{-}_{\sigma}(t)|=1${\rm)},
  \item\label{item:w2} 
       $X^{+}_{\sigma}(t)\cdot \dot \sigma(t)=0$ 
       {\rm(}resp.\ $X^{-}_{\sigma}(t)\cdot \dot \sigma(t)=0${\rm)},
  \item\label{item:w3} 
       $X^{+}_{\sigma}(t)\cdot \ddot \sigma(t)=\phi(t,0)\cdot 
           \ddot \gamma(t)$
       {\rm(}resp.\ $X^{-}_{\sigma}(t)\cdot \ddot \sigma(t)
                    =\phi(t,0)\cdot \ddot \gamma(t)${\rm)}, 
          where $\phi$ is given in \eqref{eq:phi},
  \item\label{item:w4} 
       $\det(\dot \sigma(t), X^{+}_{\sigma}(t), \ddot \sigma(t))>0$
       {\rm (}resp.\ 
       $\det(\dot \sigma(t), X^{-}_{\sigma}(t), \ddot \sigma(t))<0${\rm)}.
 \end{enumerate}
\end{proposition}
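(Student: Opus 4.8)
The plan is to reduce the four conditions to a pointwise linear-algebra problem in the plane $\dot\sigma(t)^{\perp}$ normal to the curve, and then to read off existence, uniqueness, and the two sign choices $\pm$ directly from the curvature inequality \eqref{eq:A}. First I would record the consequences of the arclength normalization: since $|\dot\sigma(t)|=1$ we have $\dot\sigma(t)\cdot\ddot\sigma(t)=0$, so $\ddot\sigma(t)$ already lies in the normal plane $\dot\sigma(t)^{\perp}$, with $|\ddot\sigma(t)|=\tilde\kappa(t)$. Because \eqref{eq:A} forces $\tilde\kappa(t)>|\kappa_s(t)|\ge 0$, the curvature of $\sigma$ never vanishes, so its Frenet frame $\{\mb{t},\mb{n},\mb{b}\}$ is well-defined and real analytic near $t=0$, with $\mb{t}=\dot\sigma$, $\ddot\sigma=\tilde\kappa\,\mb{n}$, and $\det(\mb{t},\mb{n},\mb{b})=1$.

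Next, conditions \eqref{item:w1} and \eqref{item:w2} say exactly that $X_\sigma(t)$ is a unit vector in the normal plane, so I would write $X_\sigma=a\,\mb{n}+b\,\mb{b}$ with $a^2+b^2=1$. Condition \eqref{item:w3} then becomes the single scalar equation $\tilde\kappa(t)\,a=\phi(t,0)\cdot\ddot\gamma(t)$. Here the one genuine computation enters: since $\ddot\gamma(t)=f_{uu}(t,0)$ and, by $|f_u|=1$ along the $u$-axis, $f_{uu}(t,0)$ has no $f_u$-component, the adapted orthonormal frame $\{f_u,f_{vv},\nu\}$ gives $\phi(t,0)\cdot\ddot\gamma(t)=f_{vv}(t,0)\cdot f_{uu}(t,0)$ and $|f_{uu}(t,0)|^2=\bigl(f_{vv}(t,0)\cdot f_{uu}(t,0)\bigr)^2+\kappa_\nu(t)^2$. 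Comparing with $\kappa(t)^2=\kappa_s(t)^2+\kappa_\nu(t)^2$ from \eqref{eq:kappa} yields $\bigl|\phi(t,0)\cdot\ddot\gamma(t)\bigr|=|\kappa_s(t)|$. Consequently condition \eqref{item:w3} determines $a=a(t)$ uniquely, and \eqref{eq:A} gives $|a(t)|=|\kappa_s(t)|/\tilde\kappa(t)<1$; this is precisely the step where the hypothesis is used.

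Finally, $a^2+b^2=1$ together with $|a|<1$ forces $b=\pm\sqrt{1-a^2}\ne 0$, a real analytic function of $t$, and the two signs correspond to the two vector fields. A short determinant computation, using $\ddot\sigma=\tilde\kappa\,\mb{n}$, $\det(\mb{t},\mb{n},\mb{n})=0$, and $\det(\mb{t},\mb{b},\mb{n})=-1$, gives $\det(\dot\sigma,X_\sigma,\ddot\sigma)=-\tilde\kappa(t)\,b$, so condition \eqref{item:w4} selects $b<0$ for $X^{+}_\sigma$ and $b>0$ for $X^{-}_\sigma$. This establishes both existence and uniqueness, and real analyticity is automatic since $a$, $\sqrt{1-a^2}$, and the frame vectors $\mb{n},\mb{b}$ are all real analytic.

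I expect the only real subtlety to be the magnitude identity $\bigl|\phi(t,0)\cdot\ddot\gamma(t)\bigr|=|\kappa_s(t)|$, since it is exactly what makes \eqref{eq:A} the correct solvability condition: without it the system for $(a,b)$ could be inconsistent (when $|a|>1$) or degenerate with $b=0$ (when $|a|=1$), in which latter case $\det(\dot\sigma,X_\sigma,\ddot\sigma)$ would vanish and condition \eqref{item:w4} could not be satisfied. Everything else is routine Frenet-frame bookkeeping.
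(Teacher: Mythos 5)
Your proof is correct and follows essentially the same route as the paper: the paper also reduces conditions \eqref{item:w1}--\eqref{item:w4} to pointwise linear algebra in the normal plane of $\sigma$, using the same key inequality $|\phi(t,0)\cdot\ddot{\hat\gamma}(t)|/\tilde\kappa(t)=|\kappa_s(t)|/\tilde\kappa(t)<1$ derived from \eqref{eq:A}, except that it packages your Frenet-frame computation into a small appendix lemma (existence and sign-determined uniqueness of a unit $w$ with prescribed inner products against an orthonormal pair). Your inline derivation of $|\phi(t,0)\cdot\ddot{\hat\gamma}(t)|=|\kappa_s(t)|$ from \eqref{eq:kappa} and the adapted frame $\{f_u,f_{vv},\nu\}$ is a valid (and slightly more explicit) version of the justification the paper states in one line.
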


\begin{proof}
 Since $f$ has non-zero limiting normal curvature,
 the curvature function $\kappa(t)$ of the space curve
 $\hat\gamma(t)=f(t,0)$ is positive  (cf.\ \eqref{eq:kappa}).
 Since $\phi(t,0)=f_{vv}(t,0)$ is a unit vector field 
 (cf.\ Definition~\ref{def:adapted})
 orthogonal to $\nu(t,0)$ and $\dot {\hat \gamma}(t)$,
 we have that
 \[
\left|\phi(t,0)\cdot\ddot{\hat \gamma}(t)\right|=\left|\kappa_s(t)\right|.
 \]
 Let $\tilde\kappa(t)$ be the curvature function of $\sigma(t)$. 
 If we set 
 \begin{equation}\label{eq:ct}
    c(t):=\phi(t,0)\cdot \frac{\ddot{\hat \gamma}(t)}{\tilde\kappa(t)},
 \end{equation}
 the inequality \eqref{eq:A} yields
 \[
    |c(t)|=\frac{|\kappa_s(t)|}{\tilde \kappa(t)}< 1.
 \]
 On the other hand, since $t$ is the arclength parameter of
 $\sigma$, 
 \[
    n(t):=\frac{\ddot\sigma(t)}{|\ddot\sigma(t)|}=
            \frac{\ddot\sigma(t)}{\tilde\kappa(t)}
 \]
 gives the principal unit normal vector
 field of the space curve $\sigma(t)$.
 Then applying the lemma in the appendix for 
 $a=n(t)$, $b=\dot\sigma(t)$
 and $\mu=c(t)$ as in \eqref{eq:ct},
 we can verify that $X^{+}_\sigma(t):=w$ 
 (resp.\ $X^{-}_\sigma(t):=w$) satisfies
 \ref{item:w1}--\ref{item:w4}.
 The uniqueness of $X^{+}_\sigma(t)$ and
 $X^{-}_\sigma(t)$ also follows from the lemma in the appendix.
\end{proof}
\begin{proof}[Proof of Theorem~\ref{thm:A}]
 We consider the 
 partial differential equation
 (cf.\ Proposition \ref{prop:F})
 \begin{equation}\label{eq:F2}
  (g_v,r_v,\psi_v)
   =\F(u,v;\,\psi_u,(\psi,g_u,r_u))
 \end{equation}
 for $\F$ as in \eqref{eq:F}
 with the following initial conditions:
 \begin{equation}\label{eq:F20}
  \begin{alignedat}{2}
   g(u,0)&=\sigma(u),\qquad
   &&r(u,0)=\dot\sigma(u),\\
   \psi(u,0)&=X^+_\sigma(t)  \qquad&&(\mbox{resp. }\,\,
   \psi(u,0)=X^-_\sigma(t) ).
  \end{alignedat}
 \end{equation}
 By Fact~\ref{fact:ck}, there exists a
 real analytic map
 \[
   g^{+}:U\to \R^3 \quad(\mbox{resp. }\,\, g^{-}:U\to \R^3)
 \]
 on a sufficiently small neighborhood $U$ of 
 the origin satisfying \eqref{eq:F2} and
 \eqref{eq:F20}.
 Since \eqref{eq:F2} is equivalent to the conditions
 \begin{align}
  & g^{+}_v=v \psi,\quad  (\mbox{resp. } g^{-}_v=v \psi),
  \label{e:1}\\ 
  & r_v=v \psi_u, \label{e:2}\\
  & \psi_v\cdot \psi=\phi_v\cdot \phi,\label{e:4} \\ 
  & \psi_v\cdot g^{+}_u
  =\phi_v\cdot f_u \quad 
  (\mbox{resp. } \psi_v\cdot g^{-}_u=\phi_v\cdot f_u), \label{e:5}\\
  & \psi_v\cdot r_{u}
  =(\phi\cdot f_{uu})_v
  -\frac{v}2{(\phi\cdot \phi)_{uu}}+v (\psi_u\cdot \psi_u)
  \label{e:6},
 \end{align}
 one can deduce the relation \eqref{eq:i2} (and also
 \eqref{eq:i1} as a consequence) directly from the 
 relations \eqref{eq:F20} and
 \eqref{e:1}--\eqref{e:6}.
 Consequently, the map $g^{+}$ (resp. $g^-$)
 has the same first fundamental
 form as $f$ and satisfies the properties (2) and (3)  of Theorem~\ref{thm:A}.
 The ambiguity of the construction of $g$ as
 in the statement of Theorem~\ref{thm:A} depends on the choice of
 the vector field $X$ along $\sigma$
 satisfying \ref{item:w1},
 \ref{item:w2} 
 and \ref{item:w3}.
 By the lemma in  the appendix,
 $X$ coincides with either $X^+_\sigma$
 or $X^-_\sigma$, which yields 
 at most two possibilities for $g$.
 The proof of Theorem~\ref{thm:A} is now reduced
 to the following Proposition \ref{prop:ce}.
\end{proof}

\begin{proposition}\label{prop:ce}
 The real analytic map $(g:=)g^{\pm}$ has 
 generic cuspidal edge singularities along the $u$-axis.
\end{proposition}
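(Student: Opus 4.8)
The plan is to verify directly that, along the $u$-axis, the germ $g=g^{\pm}$ satisfies the standard criterion for a cuspidal edge, namely that $g_v$ vanishes there while the three vectors $g_u$, $g_{vv}$, $g_{vvv}$ are linearly independent, and then to check that these singular points are generic through the limiting normal curvature. All the data needed along the $u$-axis are furnished by the initial conditions \eqref{eq:F20} and by the equality of the first fundamental forms of $f$ and $g$ established in the proof of Theorem~\ref{thm:A}; the real task is to extract from these the two nonvanishing facts $\kappa_\nu^{g}\neq0$ (genericity) and $g_{vvv}(u,0)\cdot\nu_g\neq0$ (the $3/2$-cusp condition), where $\nu_g$ denotes the unit normal of $g$.

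First I would record the frame along the $u$-axis. By \eqref{e:1} we have $g_v=v\psi$, so $g_v(u,0)=0$ and the $u$-axis lies in the singular set; moreover $g_u(u,0)=\dot\sigma(u)$ and $g_{vv}(u,0)=\psi(u,0)=X^{\pm}_\sigma(u)$ by \eqref{eq:F20}, and these are orthonormal by items \ref{item:w1} and \ref{item:w2} of Proposition~\ref{prop:ini}. Hence $\nu_g:=(g_u\times\psi)/|g_u\times\psi|$ is a real analytic unit normal near the axis, equal to $g_u\times g_{vv}$ along it, and $\{g_u,g_{vv},\nu_g\}$ is a positively oriented orthonormal frame there; since $g_u$ and $\psi$ remain independent near the axis, the singular set is \emph{exactly} the $u$-axis and the null direction is $\partial_v$, transverse to it. Thus conditions (1)--(3) of Definition~\ref{def:adapted} hold, so $(u,v)$ is adapted for $g$. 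Genericity is then immediate: from $g_u(u,0)=\dot\sigma$ we get $g_{uu}(u,0)=\ddot\sigma$, whence by \eqref{eq:knu}
\[
 \kappa_\nu^{g}=\det(g_{uu},g_u,g_{vv})=\det(\dot\sigma,X^{\pm}_\sigma,\ddot\sigma)
\]
along the axis, which is nonzero (with sign $+$ for $X^{+}_\sigma$ and $-$ for $X^{-}_\sigma$) by item~\ref{item:w4} of Proposition~\ref{prop:ini}.

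The crux is the independence of $\{g_u,g_{vv},g_{vvv}\}$ along the $u$-axis, equivalently the nonvanishing of $g_{vvv}(u,0)\cdot\nu_g$, since $g_u$ and $g_{vv}$ already span $\nu_g^{\perp}$. Differentiating $g_v=v\psi$ gives $g_{vvv}(u,0)=2\psi_v(u,0)$, so it suffices to show $\psi_v(u,0)\cdot\nu_g\neq0$. Here I would use the relation \eqref{e:6}: evaluating it at $v=0$ yields $\psi_v\cdot g_{uu}=(\phi\cdot f_{uu})_v$, and since $\phi\cdot f_{uuv}$ vanishes on the axis (the specialization of \eqref{eq:b1} to $g=f$), the right-hand side equals $\phi_v\cdot f_{uu}$. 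Expanding $g_{uu}$ and $f_{uu}$ in the orthonormal frames $\{g_u,g_{vv},\nu_g\}$ and $\{f_u,f_{vv},\nu_f\}$, the $g_u$- and $f_u$-components both vanish (as $|g_u|=|f_u|=1$ along the axis), the $g_{vv}$- and $f_{vv}$-components coincide by \eqref{eq:*}, and $\psi_v\cdot g_{vv}=\phi_v\cdot f_{vv}$ by \eqref{e:4}. Cancelling these equal cross terms collapses the identity to
\[
 \kappa_\nu^{g}\,\bigl(\psi_v(u,0)\cdot\nu_g\bigr)=\kappa_\nu^{f}\,\bigl(\phi_v(u,0)\cdot\nu_f\bigr).
\]

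Finally, since $f\in\Cusp^{*}$ is a genuine cuspidal edge we have $\phi_v(u,0)\cdot\nu_f=\tfrac12 f_{vvv}(u,0)\cdot\nu_f\neq0$ (this is the nonvanishing of the cuspidal curvature $\kappa_c$ of $f$), while $\kappa_\nu^{f}\neq0$ and $\kappa_\nu^{g}\neq0$; hence $\psi_v(u,0)\cdot\nu_g\neq0$, which is exactly the sought $3/2$-cusp condition. I expect the main obstacle to be this last step---distilling the clean relation above out of \eqref{e:6} and identifying the nonvanishing of its right-hand side with the cuspidal edge hypothesis on $f$---for it is here that the intrinsic product $|\kappa_c\kappa_\nu|$ of Fact~\ref{fact:prod} implicitly governs how $\kappa_c$ transforms, and where one must rule out the degenerate (non-front) frontal singularities. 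Combining the two nonvanishing conditions with the cuspidal-edge criterion shows that $g$ has cuspidal edge singularities along the $u$-axis, and $\kappa_\nu^{g}\neq0$ shows they are generic, which completes the proof.
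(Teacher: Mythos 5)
Your proof is correct, and at the decisive step it takes a genuinely different route from the paper's. The shared skeleton: both verify that $(u,v)$ is adapted for $g$, that the singular set is exactly the $u$-axis with nondegenerate points and null direction $\partial/\partial v$ transversal to the singular direction, and both reduce the cuspidal edge condition to $\det(g_u,\psi,\psi_v)\neq 0$ along the axis --- for the paper this is the front condition $\tilde\nu_v\neq 0$, for you the independence of $g_u,g_{vv},g_{vvv}$ via $g_{vvv}(u,0)=2\psi_v(u,0)$; the two are the same, both being the nonvanishing of $\tilde\kappa_c$ as in \eqref{eq:kc}. The difference lies in how that nonvanishing is obtained. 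The paper imports Fact~\ref{fact:prod} (the intrinsic invariance of $|\kappa_c\kappa_\nu|$ from \cite{MSUY}) and concludes $\tilde\kappa_c\tilde\kappa_\nu=\kappa_c\kappa_\nu\neq 0$, getting $\tilde\kappa_c\neq0$ and $\tilde\kappa_\nu\neq0$ in one stroke. You instead re-derive exactly the needed instance of that fact from the structure equations of the constructed solution: evaluating \eqref{e:6} at $v=0$, killing the term $\phi\cdot f_{uuv}$ by \eqref{eq:b1} specialized to $g=f$, and cancelling the equal tangential components via \eqref{eq:*} and \eqref{e:4} in the adapted orthonormal frames gives
\[
 \kappa_\nu^g\,\bigl(\psi_v\cdot\nu_g\bigr)=\kappa_\nu^f\,\bigl(\phi_v\cdot\nu_f\bigr),
\]
which is the signed refinement of \eqref{eq:generic}. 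This computation checks out; note it tacitly identifies $r_u$ with $g_{uu}$, which is justified since $(g_u-r)_v=v\psi_u-v\psi_u=0$ together with $r(u,0)=\dot\sigma(u)$ forces $r\equiv g_u$ (the paper makes the same tacit identification). You also obtain genericity more directly than the paper: $\kappa_\nu^g=\det(\dot\sigma,X^{\pm}_\sigma,\ddot\sigma)\neq0$ straight from \ref{item:w4} of Proposition~\ref{prop:ini}, whereas the paper reads $\tilde\kappa_\nu\neq0$ off \eqref{eq:generic} and \eqref{eq:kc}. What each approach buys: the paper's citation of Fact~\ref{fact:prod} is shorter and exhibits the proposition as an instance of a general intrinsic-invariance principle valid beyond this construction; your argument is self-contained (no appeal to the MSUY invariance theorem), determines the sign of the product rather than only its absolute value, and makes transparent which of the identities \eqref{e:4}--\eqref{e:6} actually force the invariance. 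One small caution: the criterion you invoke ($g_v$ vanishing on the axis plus independence of $g_u,g_{vv},g_{vvv}$ implies a cuspidal edge) is legitimate precisely because you first verified nondegeneracy and transversality of the null direction; under those hypotheses it is equivalent to the front-plus-criteria route of \cite{KRSUY} that the paper spells out, so no gap remains.
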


\begin{proof}
 Since $(u,v)$ is an adapted coordinate system of $g$,
 the vector field  $\psi$ in \eqref{eq:psi} is
perpendicular to
 $g_u$
 on the singular set.
 Moreover, by the definition \eqref{eq:psi} of $\psi$,
 \[
    \tilde\nu(u,v):=   \frac{g_u(u,v)\times \psi(u,v)}{|g_u(u,v)\times \psi(u,v)|}
 \]
 is a unit normal vector field to $g$ which is well-defined on the
 singular set,  where $\times$ denotes the vector product in $\R^3$.
 Moreover, the function
 \[
    \lambda:=\det(g_u,g_v,\tilde\nu)= 
     v\det(g_u,\psi,\tilde \nu)
 \]
 satisfies $\lambda_v\neq 0$ on the $u$-axis,
 because $(u,v)$ is an adapted coordinate system.
 Hence the singular points are {\it non-degenerate\/} 
 (cf.\ \cite[Proposition 2.3]{KRSUY} or \cite[Definition 1.1]{SUY}).
 Moreover, since $g_v=v\psi=0$ on the $u$-axis,
 the {\em null direction\/} 
at a point on the $u$-axis
 (cf.\ \cite[Page 306]{KRSUY} or \cite[Page 495]{SUY})
 is $\partial/\partial v$, which is linearly independent
 to the {\em singular direction\/} $\partial/\partial u$.
 Thus, to show that $g$ is a cuspidal edge, 
 it is sufficient to show that $g$ 
 is a front (cf.\ \cite{KRSUY} or \cite{SUY}), 
 which is equivalent
 to $\tilde\nu_v\neq 0$ on the $u$-axis:
 Let $\kappa_c$ and $\kappa_\nu$
 be the cuspidal curvature (cf.\ \cite[(2.4)]{MSUY})
 and the limiting normal curvature of $f$ along  $\gamma$,
 respectively.
 Since the singularity of $f$ consists of cuspidal edges, 
 $\kappa_c\neq 0$ holds (cf.\ \cite[Lemma 2.8]{MSUY}), 
 and since $f$ is generic (i.e. the singularities of
$f$ cossists only of generic cuspidal edges), 
$\kappa_\nu\neq 0$.
 By Fact \ref{fact:prod},
 $|\kappa_c\kappa_\nu|$ 
 depends only on the first fundamental from.
 Thus, we have
 \begin{equation}\label{eq:generic}
    \tilde \kappa_c(t) \tilde \kappa_\nu(t)=
    \kappa_c(t)\kappa_\nu(t)\ne 0,
 \end{equation}
 where $\tilde\kappa_c$ and $\tilde\kappa_\nu$ are 
 the cuspidal curvature and the limiting normal curvature of 
 $g$, respectively.
 Then by \cite[(2.4)]{MSUY},
 \begin{equation}\label{eq:kc}
    \tilde{\kappa}_c (t) = \det(g_u,g_{vv},g_{vvv})|_{(u,v)=(t,0)}
         = 2\det(g_u(t,0),\psi(t,0),\psi_v(t,0))\neq 0.
 \end{equation}
 So it holds on the $u$-axis that
 \begin{align*}
  \tilde\nu_v\cdot \psi&=
  \left(\frac{g_u\times\psi}{|g_u\times\psi|}\right)_v \cdot \psi\\
  &=
  \left(\frac{g_{uv}\times\psi+g_u\times\psi_v}{|g_u\times\psi|}\right)
     \cdot\psi 
  + \bigl((g_u\times\psi)\cdot\psi\bigr)\left(\frac{1}{|g_u\times\psi|}\right)_v\\
  &=-\frac{\det(g_u,\psi,\psi_v)}{|g_u\times\psi|}\neq 0.
 \end{align*}
 Hence the singular points of $g$ consist of cuspidal edge
 singularities.
 Moreover, by \eqref{eq:kc}, the limiting normal curvature
 $\tilde\kappa_\nu$
 does not vanish, which implies that $g$ is generic.
\end{proof}

\begin{proof}[Proof of Corollary \ref{cor:B}]
 Since $f$ has non-zero limiting normal curvature,
 the curvature function $\kappa(t)$ of the space curve
 $\hat\gamma(t)=f(t,0)$ is 
 greater than the absolute value $|\kappa_s(t)|$ of the
 singular curvature (cf. \eqref{eq:kappa}).
 Let $\tau(t)$ be 
 the torsion function of $\hat \gamma(t)$.
 For sufficiently small $\varepsilon>0$,
 there exists a regular space curve $\sigma^s(t)$
 ($|s|<\varepsilon$)  satisfying the following properties
 by the fundamental theorem of space curves
 \begin{itemize}
  \item $\sigma^s(0)=0$,
  \item $\sigma^0(t)=\hat \gamma(t)$,
  \item the curvature function of $\sigma^s(t)$ is equal to $\kappa(t)+s$,
  \item the torsion function of $\sigma^s(t)$ is equal to $\tau(t)$.
 \end{itemize}
 Since $\varepsilon$ is sufficiently small,
 we may assume that $\kappa(t)+s>|\kappa_s(t)|$.
 By Theorem~\ref{thm:A}, there exists $g^s\in \Cusp^*$
 ($|s|<\varepsilon$)
 such that
 \begin{enumerate}
  \item the vector field $\psi^s(u,v)$ satisfying $g^s_v=v \psi^s$
	is equal to $X^+_{\sigma^s}(u)$ along $v=0$,
	where $X^+_{\sigma^s}$ is a vector field along $\sigma^s$
	defined in Proposition~\ref{prop:ini},
  \item the first fundamental form of $g^s$ is equal to that of $f$,
  \item the singular curve $\gamma(t)$ of $f$ is the same as that of $g^s$, and
  \item $g^s(\gamma(t))=\sigma^s(t)$ holds for each $t$.
 \end{enumerate}
 Since the geodesic curvature $\kappa_s$ is intrinsic,
 \eqref{eq:kappa} yields that
 $\sqrt{(\kappa(t)+s)^2-\kappa_s(t)^2}$
 is equal to the absolute value of the limiting normal curvature of
 $g^s$, 
 which proves Corollary \ref{cor:B}.
\end{proof}
\begin{proof}[Proof of Corollary \ref{cor:C}]
 Let $\kappa(t)$ and $\tau(t)$ be the curvature function and the torsion
 function of the space curve $\hat\gamma(t)$, respectively.
 For each $s\in [0,1]$,
 there exists a regular space curve $\sigma^s(t)$
 satisfying the following properties
 by the fundamental theorem of space curves
 \begin{itemize}
  \item $\sigma^s(0)=0$,
  \item $\sigma^0(t)=\hat \gamma(t)$,
  \item the curvature function of $\sigma^s(t)$ is equal to $\kappa(t)$,
  \item the torsion function of $\sigma^s(t)$ is equal to $(1-s)\tau(t)$.
 \end{itemize}
 Since $f$ is generic,
 $\kappa(t)>|\kappa_s(t)|$ holds.
 Then by Theorem~\ref{thm:A}, there exists $g^{s,+}\in \Cusp^*$
 (resp.\ $g^{s,-}\in \Cusp^*$) 
 for $s\in [0,1]$
 such that
 \begin{enumerate}
  \item the vector field $\psi^{s,+}(u,v)$ 
	(resp.\ $\psi^{s,-}(u,v)$) satisfying 
	$g^{s,+}_v=v \psi^{s,+}$ (resp.\ $g^{s,-}_v=v \psi^{s,-}$)
	is equal to $X^+_{\sigma^s}(u)$ (resp.\ $X^-_{\sigma^s}(u)$) 
	along $v=0$, 
	where $X^{\pm}_{\sigma^s}$ are as in Proposition~\ref{prop:ini}.
  \item the first fundamental form of $g^{s,+}$ (resp.\ $g^{s,-}$)
	is equal to that of $f$,
  \item the singular curve $\gamma(t)$ of $f$ is 
	the same as that of $g^{s,+}$ (resp. $g^{s,-}$), and
  \item $g^{s,+}(\gamma(t))=\sigma^s(t)$ 
	(resp.\ $g^{s,-}(\gamma(t))=\sigma^s(t)$)
	holds for each $t$.
 \end{enumerate}
 By this construction,  the torsion function of
 the curve  $\sigma^1$ vanishes identically.
 So we can conclude that $g^{1,+}$ 
 (resp.\ $g^{1,-}$)
 is a germ of a planar cuspidal edge.
 In particular, $\sigma^1$
 lies in a plane $\Pi$, and the
 curve is invariant under the reflection with respect to the plane $\Pi$.
 Let $T$ be the reflection with respect to the plane $\Pi$.
 Then  we have
 \[ 
    T\circ\sigma^{1}=\sigma^{1},\quad
    dT(\dot\sigma^{1})=\dot\sigma^{1},\quad
    dT(\ddot\sigma^{1})=\ddot\sigma^{1},\quad
    dT(X^{+}_{\sigma^1})=X^-_{\sigma^1}.
 \]
 In fact, the lemma in the appendix implies the
 fourth equality.
 Thus, by the uniqueness of Fact~\ref{fact:ck}, 
 we have $g^{1,-}=T\circ g^{1,+}$.
 This implies the uniqueness of $g$ as in Corollary~\ref{cor:C}. 
\end{proof}

\begin{proof}[Proof of Corollary \ref{cor:D}]
 By replacing $\nu$ with $-\nu$,
 we may assume that the limiting normal curvature $\kappa_\nu$
 of $f$ takes positive values without loss of generality.
 By Theorem~\ref{thm:A}, there exists $g^{+}\in \Cusp^*$
 {\rm (}resp.\ $g^{-}\in \Cusp^*${\rm)}
 for $s\in [0,1]$ such that
 \begin{enumerate}
  \item   the vector field $\psi^{+}(u,v)$ (resp.\ $\psi^{-}(u,v)$)
	  satisfying $g^{+}_v=v \psi^{+}$ (resp.\ $g^{-}_v=v \psi^{-}$)
	  is equal to $X^+_{\hat\gamma}(u)$ (resp.\
	  $X^-_{\hat\gamma}(u)$) 
	  along $v=0$, 
	  where $X^{\pm}_{\hat\gamma}$ are as in
	  Proposition~\ref{prop:ini},
  \item the first fundamental form of $g^{+}$ (resp.\ $g^{-}$)
	is equal to that of $f$,
  \item the singular curve $\gamma(t)$ of $f$ is the same 
	as that of $g^{+}$ (resp. $g^{-}$), and
  \item $g^{+}(\gamma(t))=\hat\gamma(t)$ 
	(resp.\ $g^{-}(\gamma(t))=\hat\gamma(t)$)
	holds for each $t$.
 \end{enumerate}
 Since $(u,v)$ is an adapted coordinate system of $f$, it holds that
 \begin{equation}\label{eq:c1}
    |\phi(t,0)|=|f_{vv}(t,0)|=1,\qquad 
     \phi(t,0)\cdot \dot {\hat \gamma}(t)
       =f_{vv}(t,0)\cdot f_u(t,0)=0.
 \end{equation}
 Since $\ddot {\hat \gamma}\cdot \nu =\kappa_\nu>0$ 
 and $\nu(t,0)=f_u(t,0)\times f_{vv}(t,0)$,
 we have 
 \begin{equation}\label{eq:c2}
  \det(\dot{\hat \gamma}(t), \phi(t,0), \ddot {\hat \gamma}(t))>0.
 \end{equation}
 So the uniqueness of $X^+_{\hat \gamma}$ implies that
 $X^+_{\hat \gamma}(t)=\phi(t,0)$ holds.
 Thus we have that
 \[
    g^+(u,v)=f(u,v).
 \]
 We now define an involution
 \[
    \Cusp^* \ni f\longmapsto \check f:=g^-\in \Cusp^*.
 \]
 It can be easily checked that
 $\check f(u,v):=g^-(u,v)$ is strongly isometric to $f(u,v)(=g^{+}(u,v))$.
 From now on, we suppose that the image of the singular curve
 of $f$ is non-symmetric and non-planar.
 To prove Corollary~\ref{cor:D},
 it is sufficient to
 show that $\check f$ 
 is not congruent to $f$.
 Suppose that there exists an isometry $T$ in $\R^3$ such that 
 \begin{equation}\label{eq:T}
     f(u,v)=T\circ \check f(\xi(u,v),\eta(u,v)),
 \end{equation}
 where $(u,v)\mapsto(\xi(u,v),\eta(u,v))$
 is a local analytic diffeomorphism 
 such that
 \[
     \bigl(\xi(0,0),\eta(0,0)\bigr)=(0,0).
 \]
\begin{lemma}\label{lem:xu}
  Under the situation above, we have
  \[
    \xi(u,0)=\epsilon u,\qquad\xi_v(u,0)=0,\qquad \eta(u,0)=0,
 \]
 where $\epsilon=\pm 1$.
\end{lemma}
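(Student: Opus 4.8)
The plan is to read off the three equalities directly from the congruence relation \eqref{eq:T} by differentiating it and restricting to the $u$-axis, exploiting that $(u,v)$ is an adapted coordinate system for both $f$ and $\check f=g^-$ (cf.\ Proposition~\ref{prop:ce}). Throughout, write $\Phi:=(\xi,\eta)$ for the local diffeomorphism germ, and note that the differential $dT$ of the isometry $T$ is an orthogonal linear map, hence invertible and length-preserving.

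The first and crucial step is to establish $\eta(u,0)=0$, i.e.\ that $\Phi$ carries the singular set of $f$ into that of $\check f$. Since $T$ is a diffeomorphism of $\R^3$, the identity $f=T\circ\check f\circ\Phi$ gives $\operatorname{rank}df=\operatorname{rank}d(\check f\circ\Phi)$ at every point, so $f$ and $\check f\circ\Phi$ have the same singular locus; as $\Phi$ is a diffeomorphism germ, its restriction maps the singular set germ of $f$ bijectively onto that of $\check f$. By condition (2) of Definition~\ref{def:adapted}, both singular sets are the $u$-axis $\{v=0\}$. Therefore $\Phi(u,0)$ lies on the $u$-axis for all small $u$, which is exactly $\eta(u,0)=0$.

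Next I would differentiate \eqref{eq:T}. The chain rule yields $f_v=dT(\check f_u\,\xi_v+\check f_v\,\eta_v)$ and $f_u=dT(\check f_u\,\xi_u+\check f_v\,\eta_u)$, where $\check f_u,\check f_v$ denote the partial derivatives of $\check f$ with respect to its first and second arguments, evaluated at $(\xi,\eta)$. Restricting to $v=0$ and using $\eta(u,0)=0$, these derivatives are evaluated at $(\xi(u,0),0)$, where the adapted conditions for $\check f$ give $\check f_v=0$ and $|\check f_u|=1$. The first identity together with $f_v(u,0)=0$ then gives $dT\bigl(\check f_u\,\xi_v(u,0)\bigr)=0$; since $dT$ is invertible and $\check f_u(\xi(u,0),0)\neq 0$, we conclude $\xi_v(u,0)=0$.

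Finally, on $v=0$ the second identity reduces to $f_u(u,0)=dT\bigl(\check f_u(\xi(u,0),0)\,\xi_u(u,0)\bigr)$. Taking norms and using that $dT$ is orthogonal, together with $|f_u(u,0)|=|\check f_u(\xi(u,0),0)|=1$, we obtain $|\xi_u(u,0)|=1$. As $\xi_u(\cdot,0)$ is real analytic with values in $\{\pm1\}$, it must be the constant $\epsilon=\pm1$; integrating from $\xi(0,0)=0$ gives $\xi(u,0)=\epsilon u$. The only genuinely non-formal point is the first step—that the congruence must preserve the singular curve—while the remaining two equalities follow mechanically from the chain rule and the adapted normalization.
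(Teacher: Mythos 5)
Your proof is correct and follows essentially the same route as the paper's: first $\eta(u,0)=0$ from the fact that the congruence carries the singular set $\{v=0\}$ of $f$ onto the singular set $\{\eta=0\}$ of $\check f$, then $\xi_v(u,0)=0$ and $|\xi_u(u,0)|=1$ via the chain rule together with the adapted-coordinate normalizations $\check f_\eta=0$ and $|\check f_\xi|=1$ along the singular curve. Your explicit justifications (invertibility and orthogonality of $dT$, constancy of the $\{\pm1\}$-valued function $\xi_u(\cdot,0)$) merely spell out steps the paper leaves implicit.
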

\begin{proof}
 Since $T$ is an isometry and $(u,v)$ (resp.\ $(\xi,\eta)$) is 
 an adapted coordinate system for $f$ (resp.\ $\check f$),
 the singular set $\{v=0\}$ of $f$ coincides with the 
 singular set $\{\eta=0\}$ of $\check f$.
 Hence we have
 \[
     \eta(u,0)=0.
 \]
 Let $\tilde f(u,v)=\check f\bigl(\xi(u,v),\eta(u,v)\bigr)$.
 Since $f=T\circ\tilde f$, 
 \begin{align*}
    1 &= f_u(u,0)\cdot f_u(u,0)
      = \tilde f_u(u,0)\cdot \tilde f_u(u,0)\\
      &= \left|\xi_u(u,0)\check f_{\xi}\bigl(\xi(u,0),\eta(u,0)\bigr)
            +\eta_u(u,0)\check f_{\eta}\bigl(\xi(u,0),\eta(u,0)\bigr)
        \right|^2\\
      &       =|\xi_u(u,0)|^2.
 \end{align*}
 Here, we used the fact that $(\xi,\eta)$ is an adapted coordinate
 system for $\check f$.
 Hence we have $\xi_u(u,0)=\epsilon$ ($\epsilon=\pm 1$).
 Since $\xi(0,0)=0$, we have the first conclusion.

 On the other hand, $\partial/\partial v$ 
 (resp.\  $\partial/\partial\eta$) is the null direction of
 $f$ (resp.\ $\check f$) along the singular curve,
 so it holds that
 \begin{align*}
   0 &= f_v(u,0)  
     = \tilde f_v(u,0)\\
     &= \xi_v(u,0)f_\xi\bigl(\xi(u,0),\eta(u,0)\bigr)
       +
       \eta_v(u,0)f_\eta\bigl(\xi(u,0),\eta(u,0)\bigr)\\
     & =\xi_v(u,0)f_\xi\bigl(\xi(u,0),0\bigr).
 \end{align*}
 Since $|f_{\xi}|=1$ on the singular set, 
 we have
 $\xi_v(u,0)=0$.
\end{proof}

Since $\hat \gamma(t)$
is non-planar,
its torsion function
does not vanish.
Recall that the torsion function of a regular space curve does not depend on
the choice of orientation of the curve,
but changes sign by orientation reversing isometries of $\R^3$.
Hence the isometry $T$ as in \eqref{eq:T}
must be orientation preserving.
Since
\[
  \tilde f_{uu}=\xi_{uu}\check f_\xi+\eta_{uu}\check f_\eta 
       +(\xi_u)^2\check f_{\xi\xi}
        +2\xi_u\eta_u\check f_{\xi\eta}
        +(\eta_u)^2\check f_{\eta\eta},
\]
where $\tilde f(u,v)=\check f(\xi(u,v),\eta(u,v))$,
Lemma~\ref{lem:xu} implies that
$\tilde f_{uu}(u,0)= \check f_{\xi\xi}(\xi(u,0),0)$.
Since $f=T\circ\tilde f$ and $\xi_u(u,0)^2=1$,  it holds that,
\begin{align*}
   0<\kappa_\nu(u)
    &=\det(f_u(u,0),f_{vv}(u,0),f_{uu}(u,0)) \\
    &=\det(T \circ \tilde f_u(u,0),T \circ \tilde f_{vv}(u,0),
         T \circ \tilde f_{uu}(u,0))\\
    &=\det(\tilde f_u(u,0),\tilde f_{vv}(u,0),\tilde f_{uu}(u,0))\\
    &=\xi_u(u,0)
       \det(\check f_\xi(\xi(u,0),0), \check f_{\eta\eta}(\xi(u,0),0),
           \check f_{\xi\xi}(\xi(u,0),0)).
\end{align*}
By definition of $\check f(=g^-)$, it holds that
\[
    \det(\check f_\xi(0,0), \check f_{\eta\eta}(0,0),\check f_{\xi\xi}(0,0))
    =
    \det(\dot{\hat \gamma}(0),X^-_{\hat \gamma}(0),
        \ddot{\hat \gamma}(0))<0.
\]
So we can conclude that $\xi_u(u,0)=-1$, and by Lemma~\ref{lem:xu},
it holds that
\begin{equation}\label{eq:xi}
   \xi(u,0)=-u.
\end{equation}
Then $t$ is a common arclength parameter of $\hat \gamma(t)$
and $\check \gamma(t):=\check f(\xi(t,0),0)$.
Hence we have 
\[
   \check \gamma(-u)
  =\check \gamma(\xi(u,0))=
   T\circ \check f(\xi(u,0),\eta(u,0))=f(u,0)=\hat \gamma(u),
\]
that is, the curve $\hat\gamma$ is 
symmetric at the origin ($=\hat\gamma(0)$), 
which contradicts our assumption.
Hence $\check f$ cannot be congruent to $f$.
By the definition of strongly isometric equivalence,
it is obvious that $\check f$ is strongly isometric to $f$,
and (2) of Corollary \ref{cor:D} holds.
\end{proof}

\begin{remark}\label{rem:non-generic}
 For a real analytic map germ
 $f$ of a non-generic cuspidal edge singularity,
 the partial differential equation \eqref{eq:F2}
 cannot be solved, since
 $f_u,f_{uu},\phi$ are not linearly independent.
 Cuspidal edges on surfaces of constant Gaussian curvature
 are all non-generic (cf.\ \cite{MSUY}).
 Moreover, as shown in Proposition~\ref{prop:non-generic} below,
 Theorem~\ref{thm:A} does not hold when $f$ is not generic. 
 Although our method is not effective for such 
 surfaces, examples of isometric deformations of
 flat cuspidal edges with vanishing $\kappa_\nu$
 are given in \cite{HHNUY} and \cite{MSUY}.
\end{remark}

\begin{proposition}\label{prop:non-generic}
Let $\gamma(t)$ be the singular curve of
the cuspidal edge singularities of a $C^\infty$-map
$f$ having vanishing Gaussian curvature
on its regular set. 
Suppose that $\hat\gamma(t):=f\circ \gamma(t)$
lies in a plane in $\R^3$.
Then the image of the curve $\hat \gamma$ lies 
in a straight line\footnote{Let $C$ be a $3/2$-cusp
on $xy$-plane in $\R^3$. By considering
a cylinder or a cone over $C$, one can actually
get a flat cuspidal edge whose image of
singular set is contained in a line.
}.
\end{proposition}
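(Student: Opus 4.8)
The plan is to argue by contradiction, assuming that $\hat\gamma$ is \emph{not} a straight line, and to extract the needed information from flatness through the degeneracy of the Gauss map; no analyticity is used, only $C^\infty$ (indeed finite) smoothness. First I would fix an adapted coordinate system $(u,v)$ along the singular curve $\gamma(u)=(u,0)$ (cf.\ \cite{SUY}), so that $\hat\gamma(u)=f(u,0)$ is parametrized by arclength and $\{f_u,f_{vv},\nu\}$ is an orthonormal frame along $v=0$. Since $K\equiv 0$ on the regular set, the signed quantity $K\,d\hat A$ of condition (d) vanishes identically; by the equivalence (a)--(d) recalled above (cf.\ \cite{MSUY}) the cuspidal edge is non-generic, hence $\kappa_\nu\equiv 0$ along $\gamma$. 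Then \eqref{eq:knu} gives $f_{uu}(u,0)\cdot\nu(u,0)=0$, and together with $f_{uu}\cdot f_u=0$ this forces $\ddot{\hat\gamma}(u)=f_{uu}(u,0)=\kappa_s(u)\,f_{vv}(u,0)$, so by \eqref{eq:kappa} the curvature of $\hat\gamma$ equals $|\kappa_s|$.

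Assuming $\hat\gamma$ is not a line, we have $\kappa_s\not\equiv 0$, and I may choose an open interval $I$ on which $\kappa_s\neq 0$. On $I$ the curve $\hat\gamma$ is a regular plane curve with nonvanishing curvature, so its osculating plane is the fixed plane $P$ containing $\hat\gamma$. This osculating plane is spanned by $\dot{\hat\gamma}=f_u(u,0)$ and $\ddot{\hat\gamma}=\kappa_s f_{vv}(u,0)$, hence the unit normal $\nu(u,0)=f_u\times f_{vv}$ is, up to sign, the constant normal of $P$; by continuity $\nu(u,0)$ is constant on $I$, so $\nu_u(u,0)=0$ there.

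Next I would bring in flatness through the smooth identity $\nu_u\times\nu_v=K\,(f_u\times f_v)$, valid on the regular set. Since $K\equiv 0$ there and $\nu$ is smooth up to $v=0$, continuity gives $\nu_u\times\nu_v\equiv 0$ on all of $U$, i.e.\ the Gauss map has rank $\le 1$ everywhere. A short computation then pins down $\nu_v$ along $v=0$: differentiating $\nu\cdot f_v\equiv 0$ twice in $v$ and using $f_v(u,0)=0$ yields $2\,\nu_v\cdot f_{vv}+\nu\cdot f_{vvv}=0$ at $v=0$, while differentiating $\nu\cdot f_u\equiv 0$ once in $v$ (using $f_{uv}(u,0)=0$) and $\nu\cdot\nu\equiv 1$ shows that $\nu_v(u,0)$ has no $f_u$- and no $\nu$-component. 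Hence $\nu_v(u,0)=-\frac{1}{2}\bigl(\nu\cdot f_{vvv}\bigr)f_{vv}(u,0)$, and since $\nu\cdot f_{vvv}=\det(f_u,f_{vv},f_{vvv})=\kappa_c$ along $v=0$, we get $\nu_v(u,0)=-\frac{1}{2}\kappa_c(u)\,f_{vv}(u,0)$. Because $f$ is a genuine cuspidal edge, $\kappa_c\neq 0$ (cf.\ \cite{MSUY}), so $\nu_v(u,0)\neq 0$ on $I$.

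Finally I would derive the contradiction from the constant-rank structure of the Gauss map. On a neighborhood of $I\times\{0\}$ the rank of $d\nu$ is $\le 1$ everywhere and equals $1$ at points of $I\times\{0\}$, where $\nu_u=0$ but $\nu_v\neq 0$; by lower semicontinuity the rank is identically $1$ nearby, so the Gauss image is a regular curve $\Gamma^*$ whose tangent line at $\nu(u,v)$ is the image of $d\nu$. Along $I\times\{0\}$ every point maps to the \emph{same} point $\nu(u,0)=\vect{n}_0\in\Gamma^*$, and the tangent line of $\Gamma^*$ there is spanned by $\nu_v(u,0)$, hence by $f_{vv}(u,0)$. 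A curve has a single tangent line at a given point, so $f_{vv}(u,0)$ must have constant direction on $I$; but $f_{vv}(u,0)=\nu\times f_u=\vect{n}_0\times\dot{\hat\gamma}(u)$ rotates within $P$ precisely because $\kappa_s\neq 0$ on $I$, a contradiction. Therefore $\kappa_s\equiv 0$, and $\hat\gamma$ is a straight line. The main obstacle is the third step: wresting usable information from flatness at the singular curve, where the first fundamental form degenerates and $K$ itself is a $0/0$ expression. Replacing the condition $K=0$ by the smooth identity $\nu_u\times\nu_v=K(f_u\times f_v)$, and then reading off the rank of the Gauss map together with the nonvanishing of $\kappa_c$, is the device that circumvents this degeneracy.
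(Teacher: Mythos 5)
Your proof is correct, but it takes a genuinely different route from the paper's. The paper settles the proposition in a few lines by citing the structure theory of flat fronts of Murata--Umehara \cite{MU}: singular points of a flat front are non-umbilic by \cite[Proposition 1.10]{MU}, so $f$ is developable by \cite[Proposition 2.2]{MU}; since $\kappa_\nu\equiv 0$ (the same non-genericity observation you make via $K\,d\hat A\equiv 0$ and the equivalence (a)--(d)), the asymptotic direction along the edge is $\dot{\hat\gamma}$, hence $f$ is the \emph{tangent developable} of $\hat\gamma$ and $\nu$ is its binormal; the front condition ($\nu_v=0$ forces $\nu_u\neq 0$) then makes the torsion nonvanishing, contradicting planarity unless the curvature of $\hat\gamma$ vanishes identically. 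You avoid \cite{MU} entirely: you extract flatness from the identity $\nu_u\times\nu_v=K\,(f_u\times f_v)$, which extends by continuity across the degenerate locus, and run a constant-rank argument on the Gauss map. Your individual steps check out: on an interval $I$ with $\kappa_s\neq 0$, planarity plus $\kappa_\nu\equiv 0$ give $\nu_u(u,0)=0$; the computation $\nu_v(u,0)=-\tfrac12\kappa_c\,f_{vv}(u,0)\neq 0$ agrees with \eqref{eq:kc} in adapted coordinates (and could equally be read off the front condition directly); and the rank-one image has a single tangent line at the common image point $\vect{n}_0$, pinning the direction of $f_{vv}(u,0)=\vect{n}_0\times\dot{\hat\gamma}(u)$, which rotates at rate $\kappa_s\neq 0$ --- contradiction. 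Two minor phrasing points, neither a gap: the sign in $\ddot{\hat\gamma}=\kappa_s f_{vv}$ depends on the orientation convention for $\kappa_s$ but is immaterial; and your global curve $\Gamma^*$ should really be invoked only locally (the constant rank theorem gives, near each $(u_0,0)$, a chart in which the image is a regular arc, and local constancy of the direction of $f_{vv}$ already contradicts $\kappa_s(u_0)\neq 0$, after which connectedness of $I$ is not even needed). As for what each approach buys: the paper's proof is shorter and leans on the known classification of flat fronts as developables; yours is self-contained modulo invariants already in the paper ($\kappa_\nu$, $\kappa_c$), works at the singular curve where the metric degenerates without any structure theory, and handles the possibility that the curvature of $\hat\gamma$ vanishes at some points but not identically more transparently than the paper's proof, which tacitly assumes a nonvanishing curvature function when forming the tangent developable.
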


\begin{proof}
 The map $f$ is a flat front in the sense of \cite{MU}%
\footnote{
 A front whose Gaussan caurvature vanishes 
 is called a flat front. 
 The precise definition is given in  \cite{MU}.
}.
 By \cite[Proposition 1.10]{MU},
 the singular points of $f$ are not umbilical points.
 Then by \cite[Proposition 2.2]{MU},
 $f$ is developable. 
 Since $\kappa_\nu$ vanishes along the
 singular curve $\gamma$, the asymptotic
 direction at $\hat\gamma(t)$ is $\dot{\hat\gamma}(t)$.
 In particular, $f$ is a tangential developable surface, 
 that is, we may set
 $f(u,v)=\hat\gamma(u)+v \dot{\hat\gamma}(u)$.
 Then the unit normal vector field $\nu$ of $f$
 is equal to the binormal vector of $\hat \gamma(u)$.
 Suppose that $\hat\gamma(t)$ is a regular curve
 with non-zero curvature function which lies in
 a plane.
 Since $f$ is a front, the fact $\nu_v=0$ implies that
 $\nu_u$ does not vanish, that is, the torsion function of
 $\hat \gamma$ does not vanish, which contradicts
 the fact that $\hat\gamma(t)$ is a planar.
 Thus, the curvature function of $\hat\gamma(t)$ vanishes
 identically, namely, its image lies in a straight line.
\end{proof}

In Corollary~\ref{cor:D}, we have shown the existence
of isomers of given cuspidal edges.
However, for the case of developable surfaces
(they are non-generic),
there are no such isomers:

\begin{proposition}\label{prop:non-generic2}
 Let $\sigma(t)$ be a regular space curve
 whose curvature function $\kappa(t)$
 and torsion function $\tau(t)$ have
 no zeros. 
 Then there exists a unique 
 flat front
 germ which has cuspidal 
 edge singularities along $\sigma$.
\end{proposition}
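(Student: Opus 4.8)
The plan is to prove existence and uniqueness separately: existence by an explicit construction, and uniqueness by repeating the reasoning already used in Proposition~\ref{prop:non-generic}.

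For existence I would produce the \emph{tangential developable} of $\sigma$. After reparametrizing so that $u$ is the arclength parameter (legitimate since $\kappa\ne 0$ guarantees a well-defined Frenet frame), let $(\vect{e},\vect{n},\vect{b})$ be the Frenet frame, so that $\dot\sigma=\vect{e}$, $\dot{\vect{e}}=\kappa\vect{n}$, $\dot{\vect{n}}=-\kappa\vect{e}+\tau\vect{b}$ and $\dot{\vect{b}}=-\tau\vect{n}$, and set
\[
  f(u,v):=\sigma(u)+v\,\vect{e}(u).
\]
A direct computation gives $f_u=\vect{e}+v\kappa\vect{n}$ and $f_v=\vect{e}$, whence $f_u\times f_v=-v\kappa\,\vect{b}$; thus the singular set is exactly $\{v=0\}$ (using $\kappa\ne 0$), and $\nu:=\vect{b}$ extends smoothly across $\{v=0\}$ as a unit normal. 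I would then verify the cuspidal-edge criterion of \cite{KRSUY} (or \cite{SUY}): the function $\lambda:=\det(f_u,f_v,\nu)=-v\kappa$ satisfies $\lambda_v=-\kappa\ne 0$, so every singular point is non-degenerate; the null direction is $\partial_u-\partial_v$ (since $f_u=f_v=\vect{e}$ on $\{v=0\}$), which is transverse to the singular direction $\partial_u$; and because $\nu_u=-\tau\vect{n}$ and $\nu_v=0$, the pair $(df,d\nu)$ has rank two along $\{v=0\}$ precisely because $\tau\ne 0$, so $f$ is a front. These three facts together show that $f$ has a cuspidal edge along $\sigma$. Finally, $\nu=\vect{b}$ depends only on $u$, hence is constant along each ruling, so $f$ is developable and therefore flat.

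For uniqueness I would argue exactly as in Proposition~\ref{prop:non-generic}. Let $f$ be any flat front germ with cuspidal edges along $\sigma$. By \cite[Proposition~1.10]{MU} its singular points are non-umbilical, so by \cite[Proposition~2.2]{MU} the front $f$ is developable. Since $f$ is flat, the product $K\,d\hat A$ vanishes identically, so by the equivalence of conditions (a)--(d) recalled in the introduction the cuspidal edges are non-generic and $\kappa_\nu\equiv 0$ along $\sigma$. Consequently the asymptotic direction at $\sigma(u)$ is the tangent direction $\dot\sigma(u)$; as the asymptotic direction of a developable surface coincides with its ruling, the rulings of $f$ are tangent to $\sigma$. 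Hence $f$ must be the tangential developable $\sigma(u)+v\,\dot\sigma(u)$, i.e.\ the germ constructed above, which yields uniqueness.

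The main obstacle I expect is the existence step, namely confirming that the singularity along $\sigma$ is exactly a cuspidal edge and not a more degenerate front singularity. This is where the two hypotheses are genuinely used: $\kappa\ne 0$ secures both the regularity of the singular set and the non-degeneracy $\lambda_v\ne 0$, while $\tau\ne 0$ is exactly the front condition through $\nu_u=-\tau\vect{n}$. By contrast the uniqueness step is essentially a repetition of the argument already carried out in Proposition~\ref{prop:non-generic}, and should require no new ideas.
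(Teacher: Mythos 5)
Your proposal is correct, and its uniqueness half coincides with the paper's \emph{entire} proof: the paper's argument consists exactly of citing \cite[Proposition 1.10]{MU} (the singular points are not umbilical) and \cite[Proposition 2.2]{MU} (hence $f$ is developable), and then asserting directly that $f$ must be the tangential developable of $\sigma$. The intermediate reasoning you supply for that last assertion --- flatness forces $K\,d\hat A\equiv 0$, hence $\kappa_\nu\equiv 0$ by the equivalence of (b) and (d), hence the asymptotic (ruling) direction along $\sigma$ is the tangent direction --- is precisely the reasoning the paper spells out one proposition earlier, in the proof of Proposition~\ref{prop:non-generic}, and is omitted without comment in Proposition~\ref{prop:non-generic2}. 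Where you go beyond the paper is the existence half: the paper ends with ``which proves the assertion,'' implicitly taking as classical that the tangential developable $f(u,v)=\sigma(u)+v\,\dot\sigma(u)$ of a curve with $\kappa\neq 0$ and $\tau\neq 0$ actually has cuspidal edge singularities along $\sigma$, whereas you verify this via the criterion of \cite{KRSUY}: your computations are correct ($f_u\times f_v=-v\kappa\,\vect{b}$, so $\lambda=-v\kappa$ and $\lambda_v=-\kappa\neq 0$ gives non-degeneracy; $f_u=f_v=\vect{e}$ on $\{v=0\}$ gives the null direction $\partial_u-\partial_v$ transverse to the singular direction $\partial_u$; and $d\nu(\partial_u-\partial_v)=-\tau\vect{n}\neq 0$ gives the front condition), and you correctly isolate $\tau\neq 0$ as exactly the front condition and $\kappa\neq 0$ as the non-degeneracy condition. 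So your write-up is a correct and strictly more complete version of the paper's argument; what it buys is an explicit existence verification that the paper's one-line conclusion leaves to the reader, while the uniqueness direction is the same two-citation route through \cite{MU}.
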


\begin{proof}
 Let $f$ be a 
 flat front which has cuspidal 
 edge singularities along $\sigma$.
 By \cite[Proposition 1.10]{MU},
 the singular set of $f$ cannot be
 umbilical points.
 So \cite[Proposition 2.2]{MU} implies that
 $f$ is a developable surface.
 In paticular, $f$ must be a tangential
 developable of $\sigma$, which proves the assertion.
\end{proof}

\begin{figure}[htb]
 \centering
 \includegraphics[width=0.6\hsize]{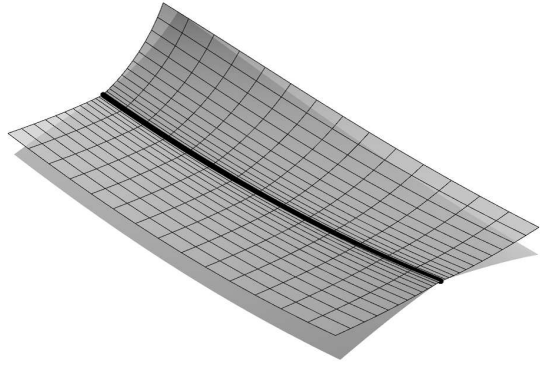}
 \caption{%
    The images of $f(u,v)$ (bottom-left) 
    and $g(u,v)$ (top-right) respectively,
    which have the common image of the singular set 
     (the range of the map is 
    $|u|<1/8$ and $|v|<1/4$).
}
 \label{fig:isomer_ex}
\end{figure}

Finally, we shall give a concrete example:
We set
\[
   f(u,v) := 
     \left( u,
          \ -\frac{v^2}{2}+\frac{u^3}{6},\ 
          \frac{u^2}{2}+\frac{u^3}{6}+\frac{v^3}{6}
     \right),
\]
whose singular set consists of generic cuspidal edges, and
is parametrized by
\[
   \hat\gamma(t) := f(t,0)= 
       \left( t,\ 
              \frac{t^3}{6} ,\ 
              \frac{t^2}{2} + \frac{t^3}{6} \right).
\]
We remark that the curvature function $\kappa$ and
torsion function $\tau$ of $\hat\gamma$ is given by 
\[
   \kappa(t)=\sqrt{\frac{2\delta}{(2+2t^2+2t^3+t^4)^3}},\quad
   \tau(t)=-\frac{4}{\delta}
        \qquad (\delta:=4+8t^2+8t^3+t^4).
\]
In particular, $\hat \gamma$ is non-symmetric and non-planar
as the singular curve of $f$. 
We set
$g := (g^1, g^2, g^3)$,
where
\begin{align*}
 g^1(u,&v)
   := u+\frac{u^2 v^2}{2}-\frac{u^3 v^2}{2}-\frac{u v^4}{2}
       +\frac{v^5}{30}+\frac{u^3 v^3}{6}
 +\frac{9 u^2 v^4}{4}       +\frac{v^6}{6},\\
 g^2(u,&v)
   := \frac{v^2}{2}+\frac{u^3}{6}-u^2 v^2+\frac{u v^3}{3}
        +2 u^3 v^2-\frac{u^2 v^3}{3}+u v^4-\frac{v^5}{5}
        -\frac{9 u^4 v^2}{4}-6 u^2 v^4\\
       &+\frac{13 u v^5}{15}-\frac{11 v^6}{36},\\
 g^3(u,&v)
   := \frac{u^2}{2}+\frac{u^3}{6}-u v^2+\frac{v^3}{6}+u^2 v^2
        +\frac{v^4}{2}-\frac{u^2 v^3}{3}-\frac{5 u v^4}{2}
        -\frac{v^5}{15}-2 u^4 v^2\\
       &+\frac{2 u^3 v^3}{3}+6 u^2 v^4+\frac{2 u v^5}{5}
        +\frac{25 v^6}{18}.
\end{align*}
The singular set of $g$ consists of generic cuspidal edges, and
coincides with that of $f$, namely
$g(t,0)=\hat\gamma(t)$ holds.
This $g$ gives an approximation of  $\check f$.
In fact, one can easily check that
the coefficients of
the first fundamental form of $g$
coincide with those of $f$
up to the fifth-order terms of $u,v$ near the origin 
(see Figure \ref{fig:isomer_ex}).

\section{Realization of generic intrinsic cuspidal edges into $\R^3$.}
\label{sec:realize}

Let $d\sigma^2$ be a positive semi-definite {\it real analytic\/}
metric defined on a neighborhood of the origin in the $uv$-plane $U$.
Then $d\sigma^2$ can be written in the following form:
\[
   d\sigma^2=E\,du^2+2\,F\,du\,dv+G\,dv^2.
\]
The metric is called a (germ of) {\it Kossowski's metric\/}
 (cf.\ \cite{HHNSUY})
if it satisfies the following conditions:
\begin{enumerate}
 \item the $u$-axis consists of a singular set of $d\sigma^2$,
 \item $E_v=G_v=0$ along the $u$-axis,
 \item there exists a real analytic function
       $\lambda$ on $U$ such that $EG-F^2=\lambda^2$, and
 \item the gradient vector field $\nabla \lambda=(\lambda_u,\lambda_v)$
       does not vanish along the $u$-axis.
\end{enumerate}
Further systematic treatments of Kossowski's metrics are
given in \cite{HHNSUY}.
Let $K$ be the Gaussian curvature of $d\sigma^2$
on $U\setminus\{v= 0\}$.
Kossowski showed in \cite{K} that 
\[
  K\,d\hat A \qquad (d\hat A:=\lambda\, du\wedge dv)
\]
can be smoothly extended on $U$
(cf.\ the condition (d) in the introduction), 
and proved the following assertion.

\begin{fact}[Kossowski]
 Suppose that $K\,d\hat A$ does not have zeros on the $u$-axis.
 Then there exist a neighborhood $V(\subset U)$ and a
 real analytic wave front 
 $f:V\to \R^3$ such that the pull-back metric of the 
 canonical metric of $\R^3$ by $f$ coincides with $d\sigma^2$.
\end{fact}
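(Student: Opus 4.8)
The plan is to reduce Kossowski's realization theorem to the very construction used in the proof of Theorem~\ref{thm:A}, the point being that the coefficient map $\F$ of Proposition~\ref{prop:F} can be built \emph{directly} out of the intrinsic data $E,F,G$ rather than out of a reference immersion. First I would show that, after an intrinsic change of coordinates, the germ of a generic Kossowski metric $d\sigma^2$ can be brought into \emph{adapted form}, meaning that along the $u$-axis one has $E=1$, $F=G=0$ together with $E_v=F_v=G_v=0$ and $G_{vv}=2$; this is the intrinsic counterpart of Definition~\ref{def:adapted}, and it lets me introduce $b:=F/v$, $a:=G/v^2$ and $h:=(F_u-\tfrac12 E_v)/v$ as real analytic functions near the $u$-axis, the required divisibility by the appropriate powers of $v$ following from the Kossowski conditions (e.g.\ $G(u,0)=G_v(u,0)=0$ gives $a$ analytic, and $a(u,0)=\tfrac12 G_{vv}(u,0)=1$).

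The key observation is that every extrinsic quantity occurring in the definition \eqref{eq:F} of $\F$ — namely $\phi\cdot\phi$, $f_u\cdot\phi$, $\phi\cdot f_{uu}$, $\phi_v\cdot\phi$, $\phi_v\cdot f_u$ and $(\phi\cdot\phi)_{uu}$ — was in fact shown, in the course of proving Proposition~\ref{prop:F}, to equal a universal expression in $E,F,G$; for instance $\phi\cdot\phi=a$, $f_u\cdot\phi=b$, $\phi\cdot f_{uu}=h$ and $\phi_v\cdot f_u=b_v-\tfrac1{2v}G_u$ (cf.\ \eqref{eq:a1}--\eqref{eq:a3}). Hence I can \emph{define} a real analytic map $\F$ intrinsically, using $a,b,h,E$ in place of those inner products. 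I would then pick any real analytic regular space curve $\sigma(u)$, parametrized by arclength so that $|\dot\sigma|^2=E(u,0)=1$, whose curvature $\tilde\kappa$ satisfies $\tilde\kappa>|\kappa_s|$ for the intrinsic singular curvature $\kappa_s$ determined by $d\sigma^2$; such a curve exists since $\tilde\kappa$ may be prescribed freely and large. Proposition~\ref{prop:ini} then produces the unit field $X^{\pm}_\sigma$, and the hypothesis $K\,d\hat A\neq0$ on the $u$-axis guarantees, via the equivalence of (a)--(d) in the introduction (that is, $\kappa_\nu\neq0$), that the matrix $(X^{\pm}_\sigma,\dot\sigma,\ddot\sigma)$ is regular — precisely the nondegeneracy exploited in Proposition~\ref{prop:F} and Proposition~\ref{prop:ce}.

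With these ingredients I would apply the Cauchy--Kovalevskaya theorem (Fact~\ref{fact:ck}) to the equation $(g_v,r_v,\psi_v)=\F(u,v;\psi_u,(\psi,g_u,r_u))$ with initial data $g(u,0)=\sigma(u)$, $r(u,0)=\dot\sigma(u)$ and $\psi(u,0)=X^{\pm}_\sigma(u)$, obtaining a real analytic $g:V\to\R^3$. Exactly as in the proof of Theorem~\ref{thm:A}, the system is equivalent to the scalar relations \eqref{e:1}--\eqref{e:6}, now read against the intrinsic functions $a,b,h$, and a chain of ODE-uniqueness arguments in $v$ — integrating $\psi\cdot\psi$, $g_u\cdot\psi$ and $g_u\cdot g_u$ from their prescribed values on the $u$-axis — forces the first fundamental form of $g$ to coincide with $d\sigma^2$. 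Proposition~\ref{prop:ce} then shows $g$ is a generic cuspidal edge, hence a front, so that $g$ is the desired wave front realizing $d\sigma^2$. The main obstacle I anticipate is making this first step genuinely intrinsic: one must verify that the identity chain \eqref{eq:a1}--\eqref{eq:a3} underlying Proposition~\ref{prop:F}, originally derived \emph{assuming} an immersion $f$, can be run in reverse so as to \emph{recover} the metric from a solution $g$, and that the reduction to adapted form — together with the divisibility of $F$, $G$ and $E_v$ by the relevant powers of $v$ — is available for an abstract Kossowski metric rather than for an induced first fundamental form.
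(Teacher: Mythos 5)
Your proposal is correct and follows essentially the same route as the paper: the paper establishes this Fact (in the refined form of the Theorem of Section~\ref{sec:realize}) precisely by intrinsifying the coefficient map $\F$ of Proposition~\ref{prop:F} --- using the normal form $E=1+v^2E_0$, $F=0$, $G=v^2G_0$ of a Kossowski metric from \cite{HHNSUY} in place of your along-the-axis adapted conditions and quotients $a,b,h$ --- and then running Cauchy--Kovalevskaya with the initial data of Proposition~\ref{prop:ini} and the front argument of Proposition~\ref{prop:ce}. One small slip worth noting: the regularity of the initial matrix $(X^{\pm}_{\sigma},\dot\sigma,\ddot\sigma)$ does not come from $K\,d\hat A\neq 0$ but from the strict inequality $\tilde\kappa>|\kappa_s|$ alone (by the appendix lemma the normal component is $\pm\sqrt{1-\mu^2}\neq 0$); the hypothesis $K\,d\hat A\neq 0$ is used instead, via the intrinsic reformulation of the product curvature $\kappa_\Pi$ in \cite{HHNSUY}, to force $\det(g_u,\psi,\psi_v)\neq 0$ along the $u$-axis and hence that the solution is a front with (generic) cuspidal edges.
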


See \cite{K} and \cite{HHNSUY} for detailed discussions.  
If the null-direction of the metric $d\sigma^2$
is transversal to the $u$-axis, the singular points
of $d\sigma^2$ are called of {\it $A_2$-singularities}
or {\it intrinsic cuspidal edges}.
The induced metrics of wave fronts in $\R^3$
are all considered as Kossowski's metrics
and cuspidal edge singular points corresponds to
$A_2$-singularities (cf.\ \cite{HHNSUY}). 
Moreover, in \cite{HHNSUY}, the following expression of the singular curvature
is given;
\[
   \kappa_s:=\frac{-F_vE_u+2EF_{uv}-EE_{vv}}{2E^{3/2}\lambda_v}
\]
under the assumption that $\lambda_v>0$.
Since this expression of $\kappa_s$ does not depend on a
choice of such a local coordinate $(u,v)$, it can be
considered as an invariant of the metric $d\sigma^2$
at $A_2$-singularities.
We can prove the following assertion as an
modification of the proof of Theorem~\ref{thm:A}:
\begin{theorem}
 Let $d\sigma^2$ be a real analytic 
 Kossowski's metric given as above
 and $\kappa_s(t)$ the singular curvature
 function along the $u$-axis.
 Let $\sigma(t)$ be a real analytic regular space curve
 whose curvature function $\tilde\kappa(t)$
 satisfies 
 \begin{equation}\label{eq:A2}
  \tilde\kappa(t)>|\kappa_s(t)|
 \end{equation}
 for all sufficiently small $t$.
 Suppose that $K\,d\hat A$ does not have zeros on the $u$-axis.
 Then there exist a neighborhood $V(\subset U)$ and a
 real analytic wave front 
 $f:V\to \R^3$ such that
 \begin{enumerate}
  \item the first fundamental form of $f$ coincides 
	with $d\sigma^2$, 
  \item $f(t,0)=\sigma(t)$ holds for each $t$.
 \end{enumerate}
\end{theorem}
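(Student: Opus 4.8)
The plan is to run the argument of Theorem~\ref{thm:A} \emph{intrinsically}: since there is no longer a reference map, every quantity that referred to it must be replaced by an expression built from the metric coefficients $E,F,G$. First I would normalize so that $(u,v)$ is adapted in the intrinsic sense. The Kossowski conditions and the $A_2$-condition already force $E_v=G_v=0$, $F(u,0)=G(u,0)=0$ along the $u$-axis, and after an analytic change of coordinates fixing $\{v=0\}$ (reparametrizing the $u$-axis by its $d\sigma^2$-arclength and rescaling $v$) one may further assume $E(u,0)=1$, $F_v(u,0)=0$ and $\tfrac12G_{vv}(u,0)=1$. Because $t$ is the arclength parameter of $\sigma$, this is exactly the compatibility needed between $f(t,0)=\sigma(t)$ and $|f_u(u,0)|^2=E(u,0)=1$.

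Next I observe that the map $\F$ of \eqref{eq:F} involves the reference surface only through $\phi\cdot\phi$, $f_u\cdot\phi$, $\phi\cdot f_{uu}$ and their derivatives, and each of these is determined by $d\sigma^2$:
\[
   \phi\cdot\phi=\frac{G}{v^2},\qquad f_u\cdot\phi=\frac{F}{v},\qquad
   \phi\cdot f_{uu}=\frac{1}{v}\Bigl(F_u-\tfrac12E_v\Bigr).
\]
Since $G$, $F$ and $F_u-\tfrac12E_v$ vanish to the appropriate order on the $u$-axis, these right-hand sides extend real-analytically across $v=0$; substituting them into \eqref{eq:F} yields a real analytic $\F$ depending only on $d\sigma^2$. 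In the same way I reinterpret Proposition~\ref{prop:ini}: in adapted coordinates the value $\phi(t,0)\cdot\ddot\gamma(t)$ appearing in condition \ref{item:w3} equals the intrinsic singular curvature $\kappa_s(t)=F_{uv}(u,0)-\tfrac12E_{vv}(u,0)$, so hypothesis \eqref{eq:A2} is precisely what gives $|\kappa_s|<\tilde\kappa$ and hence, via the lemma in the appendix, the unit field $X^+_\sigma$ along $\sigma$. I then solve the Cauchy--Kovalevskaya system \eqref{eq:F2} (Fact~\ref{fact:ck}) with initial data $f(u,0)=\sigma(u)$, $r(u,0)=\dot\sigma(u)$, $\psi(u,0)=X^+_\sigma(u)$, obtaining a real analytic $f$ (playing the role of $g$) together with $r=f_u$ and $\psi$.

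The substantive step is to show the induced metric of $f$ is exactly $d\sigma^2$; with no ambient surface to compare against, I run the chain \eqref{eq:a1}--\eqref{eq:a3} in reverse. Writing $A:=f_u\cdot f_u$, $B:=f_u\cdot\psi$, $C:=\psi\cdot\psi$, $D:=f_{uu}\cdot\psi$, relation \eqref{e:4} gives $C_v=(G/v^2)_v$, so $C=G/v^2$ and $f_v\cdot f_v=G$; feeding this into \eqref{e:5} gives $B_v=(F/v)_v$, so $B=F/v$ and $f_u\cdot f_v=F$. The coefficient $E$ is the delicate one: differentiating $D$, using $f_{uuv}=v\psi_{uu}$ together with \eqref{e:6}, shows $D_v=\partial_v\bigl((F_u-\tfrac12E_v)/v\bigr)$, and the initial value $D(u,0)=\ddot\sigma\cdot X^+_\sigma=\kappa_s(u)$ supplied by condition \ref{item:w3} forces $D=(F_u-\tfrac12E_v)/v$ identically; since $vD=F_u-\tfrac12A_v$, this yields $A_v=E_v$, and $A(u,0)=|\dot\sigma|^2=1=E(u,0)$ gives $f_u\cdot f_u=E$. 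Thus the first fundamental form of $f$ is $d\sigma^2$ and $f(t,0)=\sigma(t)$. I expect this reverse propagation of the $E$-identity — which is genuinely second order and hinges on both \eqref{e:6} and the intrinsic reading of $\kappa_s$ as the initial value of $f_{uu}\cdot\psi$ — to be the main obstacle, together with the removable-singularity bookkeeping that makes the intrinsic right-hand sides real analytic across $v=0$.

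Finally, to see that $f$ is a wave front I reuse Proposition~\ref{prop:ce} essentially verbatim: $\tilde\nu=(f_u\times\psi)/|f_u\times\psi|$ is a unit normal well-defined on the singular set, the non-degeneracy of the singularities and the transversality of the null direction $\partial/\partial v$ to the singular direction $\partial/\partial u$ follow from $\{f_u(u,0),\psi(u,0)\}$ being orthonormal, and the front condition reduces to $\tilde\nu_v\neq0$, i.e.\ $\det(f_u,\psi,\psi_v)\neq0$ on the $u$-axis. The latter is the non-vanishing of $\tilde\kappa_c$, while the non-vanishing of $\tilde\kappa_\nu$ is exactly the hypothesis that $K\,d\hat A$ has no zeros on the $u$-axis, which is the intrinsic counterpart of genericity through the equivalence (b)$\Leftrightarrow$(d) of the introduction and Fact~\ref{fact:prod}. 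This produces the desired real analytic wave front realizing $d\sigma^2$ along $\sigma$.
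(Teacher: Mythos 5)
Your proposal is correct and takes essentially the same route as the paper: both rewrite the system \eqref{eq:F2} intrinsically in terms of the metric coefficients (the paper using the normal form $E=1+v^2E_0$, $F=0$, $G=v^2G_0$ of \cite{HHNSUY}), solve it by the Cauchy--Kovalevskaya theorem with the initial data $\sigma$, $\dot\sigma$, $X^{\pm}_\sigma$ supplied by the appendix lemma via \eqref{eq:A2}, and deduce the front/cuspidal-edge property from the intrinsic non-vanishing of the product of cuspidal and limiting normal curvatures (the paper's $\kappa_\Pi$) guaranteed by $K\,d\hat A\neq 0$. Your explicit reverse propagation $C\to B\to D\to A$ verifying that the induced metric equals $d\sigma^2$ is a correct fleshing-out of the step the paper leaves implicit in the phrase ``under the same initial conditions as in the proof of Theorem~\ref{thm:A}.''
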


In \cite{K}, the isometric deformations of 
singularities are not discussed, 
and Theorem~\ref{thm:A} can be considered as 
a refinement of \cite[Theorem 1]{K} in the case
of cuspidal edge singularities.
As pointed out in the introduction,
the following proof is a different from Kossowski's original approach.

\begin{proof}
 Let 
 $d\sigma^2=E\,du^2+2\,F\,du\,dv+G\,dv^2$ 
 be a Kossowski's metric
 such that the singular set $\{v=0\}$ consists of 
 $A_2$-singularities.
 Then, without loss of generality, we may assume that  
$d\sigma^2$
 satisfies the following expressions (cf.\ \cite{HHNSUY});
 \begin{equation}
   E=1+v^2E_0,\quad
   F=0, \quad
   G=v^2 G_0,
 \end{equation}
where $E_0=E_0(u,v)$ and $G_0=G_0(u,v)$ are certain real analytic functions.
 We now suppose that there exists a real analytic
 wave front $f:U\to \R^3$ so that the first fundamental form
 of $f$ is equal to $d\sigma^2$.
 We can define a real analytic map $\phi=\phi(u,v)$ so that $f_v=v \phi$.
 Then, it holds that $G_0=\phi\cdot \phi$.
 Keeping \eqref{e:4} in mind, we have that
 \begin{equation}\label{e:4rev}
  \phi_v\cdot \phi=\frac{(\phi\cdot \phi)_v}2=\frac12 (G_0)_v.
 \end{equation}
 On the other hand, since
 \[
   v f_u\cdot \phi=f_u\cdot f_v=F=0
 \]
 we have $f_u\cdot \phi=0$.
 In particular, we get
 (cf.\ \eqref{e:5})
 \begin{equation}\label{e:5rev}
  \phi_v\cdot f_u=(\phi\cdot f_u)_v-\phi\cdot f_{uv}
   =-\phi\cdot f_{uv}
   =-\phi\cdot (f_{v})_u
   =-\frac{v(G_0)_u}2. 
 \end{equation}
 Next \eqref{e:6} in mind, we have that
 \begin{equation}\label{e:6rev}
  \phi\cdot f_{uu}
   =\frac{(f_v\cdot f_{u})_u-(f_{uv}\cdot f_u)}v
   =-\frac{(f_{uv}\cdot f_u)}v
   =-\frac{E_v}{2v}=-\frac{2E_0+v (E_0)_v}{2}.
 \end{equation}
 By \eqref{e:4rev}, \eqref{e:5rev} and \eqref{e:6rev},
 $f$ must satisfy the equation
 \begin{equation}\label{pde:new3}
  (f_v,r_v,\phi_v)=\tilde{\F}(u,v;\phi_u,(\phi,f_u,r_u)),
 \end{equation}
 where $r:=f_u$, and $\tilde{\F}:=(\tilde \F^1,\tilde \F^2,\tilde \F^3)$ is given by
 \begin{align*}
  &\tilde \F^1(u,v;\, {x},({y}_1,{y}_2,{y}_3)):= 
      v {y}_1,\\
  &\tilde \F^2(u,v;\, {x},({y}_1,{y}_2,{y}_3)):= v {x},\\
  &\tilde\F^3(u,v;\, {x},({y}_1,{y}_2,{y}_3))\\
   &:=
   \frac12((y_1,y_2,y_3)^T)^{-1}
   \pmt{(G_0)_v\\ 
          - v (G_0)_u\\
          - 3 (E_0)_v - v (E_0)_{vv}-v (G_0)_{uu}
               +2v (x\cdot x)
   }.
 \end{align*}
 Applying the Cauchy-Kovalevskaya theorem,
 we can get a real analytic solution of
 the equation \eqref{pde:new3} under the same initial 
 conditions as in  the proof of Theorem~\ref{thm:A}.
 Since the product curvature $\kappa_\Pi$ can be reformulated
 as an invariant of the $A_2$-singular point of
 a given Kossowski metric as shown in \cite{HHNSUY},
 and the condition $K\,d\hat A\ne 0$ implies the
 condition $\kappa_\Pi\ne 0$.
 Thus we can prove that $f$ has cuspidal edge singularity
 along the $u$-axis, and we get the assertion.
\end{proof}

\section*{Appendix}
The following assertion is applied to prove
Proposition \ref{prop:ini}:
\begin{lemma*}
 Let $S^2$ be the unit sphere in $\R^3$  centered at the origin.
 Let $a$, $b\in S^2$ be two
 mutually orthogonal unit vectors, and $\mu$ a real number
 with $|\mu|<1$.
 Then there exists a unit vector $w\in S^2$
 satisfying
 \[
    w \cdot a=0,\qquad
    w \cdot b=\mu.
 \]
 Moreover, such a vector $w$ is uniquely determined
 under the assumption that the determinant
 $\det(a,b,w)$ 
 is positive $($resp.\ negative$)$.
\end{lemma*}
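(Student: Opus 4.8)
The plan is to reduce everything to coordinates in a well-chosen orthonormal basis. First I would complete $\{a,b\}$ to a positively oriented orthonormal basis of $\R^3$ by setting $c:=a\times b$; since $a$ and $b$ are mutually orthogonal unit vectors, the triple $\{a,b,c\}$ is orthonormal and satisfies $\det(a,b,c)=1$. Writing the unknown vector in this basis as $w=\alpha a+\beta b+\gamma c$, the two prescribed linear conditions $w\cdot a=0$ and $w\cdot b=\mu$ immediately force $\alpha=0$ and $\beta=\mu$, so only the coefficient $\gamma$ remains to be determined.

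Next I would impose the normalization $|w|=1$. Because the basis is orthonormal this reads $\alpha^2+\beta^2+\gamma^2=1$, hence $\gamma^2=1-\mu^2$. The hypothesis $|\mu|<1$ guarantees $1-\mu^2>0$, so there are exactly two real solutions $\gamma=\pm\sqrt{1-\mu^2}$, giving the two candidate vectors $w=\mu b\pm\sqrt{1-\mu^2}\,c$. This already establishes the existence assertion.

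For the uniqueness under the sign condition, I would compute the determinant directly: expanding $\det(a,b,w)=\det(a,b,\mu b+\gamma c)$ and using $\det(a,b,b)=0$ together with $\det(a,b,c)=1$ yields $\det(a,b,w)=\gamma$. Thus the sign of the determinant coincides with the sign of $\gamma$, so requiring $\det(a,b,w)>0$ (resp.\ $<0$) singles out the unique solution $\gamma=+\sqrt{1-\mu^2}$ (resp.\ $\gamma=-\sqrt{1-\mu^2}$), which proves the stated uniqueness.

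The argument is entirely elementary, and there is no real obstacle; the only point deserving attention is that the strict inequality $|\mu|<1$ is precisely what produces two genuinely distinct solutions with $\gamma\neq 0$. At $|\mu|=1$ one would instead have $\gamma=0$, forcing $\det(a,b,w)=0$ and rendering the sign condition vacuous, so the strictness of the hypothesis is exactly what the uniqueness clause requires.
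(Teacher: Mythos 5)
Your proof is correct and is essentially the same as the paper's: the paper writes $w = M(0,\mu,\pm\sqrt{1-\mu^2})^T$ with $M=(a,b,a\times b)\in\SO(3)$, which is exactly your expansion of $w$ in the orthonormal basis $\{a,b,a\times b\}$. Your version simply spells out the determinant computation $\det(a,b,w)=\gamma$ and the role of the strict inequality $|\mu|<1$, which the paper leaves as ``immediate.''
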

\begin{proof}
 Let $M:=(a,b,a\times b)\in\SO(3)$,
 where ``$\times$'' is the vector product of $\R^3$.
 Then the vector 
 \[
   w:=M\begin{pmatrix}
	 0 \\
	 \mu \smallskip\\
	 \pm\sqrt{1-\mu^2}
       \end{pmatrix}
 \]
 has the desired property.
 The uniqueness can be shown immediately.
\end{proof}
 
\section*{Acknowledgments}
The authors are grateful to Wayne Rossman
for valuable comments.


\begin{thebibliography}{20}
\bibitem{dt}
 \textsc{F. S. Dias and F. Tari},
 On the geometry of the cross-cap in the Minkowski $3$-space, 
 preprint, 2012.\newline
 Available from www2.icmc.usp.br/{$^\sim$}faridtari/Papers/DiasTari.pdf.

\bibitem{FT} 
 \textsc{D. Fuchs and S. Tabachnikov},
 Thirty lectures on Classic Mathematics,
 American Mathematical Society, 
 Providence, Rhode Island, 2007.

\bibitem{fh}
 \textsc{T. Fukui and M. Hasegawa},
 The Fronts of Whitney umbrella---a differential
 geometric approach via blowing up,
 J.\ Singul., 4 (2012), 35--67.


\bibitem{ggs}
 \textsc{R. Garcia, C. Gutierrez, and J. Sotomayor},
 Lines of principal curvature around umbilics and Whitney
 umbrellas,
 Tohoku Math.\ J., 52 (2000), 163--172.

\bibitem{HHNUY}
  \textsc{
  M. Hasegawa, A. Honda, K. Naokawa, M. Umehara, and K. Yamada},
  Intrinsic invariants of cross caps,
  Selecta Mathematica, {\bf 20}  (2014), 769-785.

\bibitem{HHNSUY}
 \textsc{M. Hasegawa, 
 A. Honda, K. Naokawa, K. Saji, M. Umehara, 
 K. Yamada},
 Intrinsic properties of singularities of surfaces,
 in preparation.

\bibitem{K}
 \textsc{M. Kossowski}, 
 Realizing a singular first fundamental form as a 
    nonimmersed surface in Euclidean 3-space, 
  J. Geom. 81 (2004), 101--113.

\bibitem{KRSUY}
  \textsc{%
  M. Kokubu,  W. Rossman, K. Saji, M. Umehara, and K. Yamada},
  Singularities of flat fronts in hyperbolic $3$-space,
  Pacific J. Math., 221 (2005), 303--351.
\bibitem{MB}
  \textsc{
  L. F. Martins and J. J. Nu\~no-Ballesteros},
  Contact properties of surfaces in $\R^3$ with corank $1$
       singularities,  
  preprint, 2012.\newline
  Available from
  www.uv.es/nuno/Preprints/Nuno$\_$Martins.pdf.
\bibitem{MS}
  \textsc{ 	
  L. F. Martins and K. Saji},
  Geometric invariants of cuspidal edges,
         preprint, 2013.\newline
       Available from
      www.ibilce.unesp.br/Home/Departamentos/%
      Matematica/Singularidades/martins-saji-geometric.pdf

\bibitem{MSUY}
  \textsc{
  L. F. Martins, K. Saji, M. Umehara, and K. Yamada},
  Behavior of Gaussian curvature 
  near non-degenerate
  singular points on wave fronts, 
  preprint,
  2013, arXiv:1308.2136.

\bibitem{MU}
 \textsc{
 S. Murata and M. Umehara},
 Flat surfaces with singularities in Euclidean 3-space, 
 J. Differential Geometry  82 (2009), 279--316．

\bibitem{Oset-Tari}
 \textsc{
 R. Oset Sinha and  F. Tari},
 Projections of surfaces in $\mathbb{R}^4$
     to $\mathbb{R}^3$ and the geometry of their singular images,
 preprint, 2012.\newline
  Available from
  www2.icmc.usp.br/{$^\sim$}faridtari/Papers/OsetTariSingularSurfaces.pdf.
\bibitem{Sp} 
  \textsc{M. Spivak},
  A comprehensive Introduction to Differential Geometry V,
  Publish or Perish Inc. Houston, Texas, 1999.
\bibitem{SUY}
  \textsc{K. Saji, M. Umehara, and K. Yamada},
  The geometry of fronts,
   Ann.\ of Math., 169 (2009), 491--529.
\bibitem{faridtari}
  \textsc{F. Tari},
  On pairs of geometric foliations on a cross-cap,
  Tohoku Math. J. 59 (2007), 233--258.
 \end{thebibliography}
\end{document}